\numberwithin{equation}{section}
\newtheorem{definition}{Definition}[section]
\newtheorem{proposition}[definition]{Proposition}
\newtheorem{theorem}[definition]{Theorem}
\newtheorem{remark}[definition]{Remark}
\newtheorem{lemma}[definition]{Lemma}
\newtheorem{corollary}[definition]{Corollary}
\newtheorem{example}[definition]{Example}
\begin{document}

\title{Similarity-Sensitive Entropy under Representation Change and Inference}

\author{Joseph Samuel Miller\\Independent Researcher\\USA}
\date{\today}

\maketitle

\begin{abstract}
Similarity-sensitive entropy measures the uncertainty of a probability law relative to a similarity kernel that encodes the distinguishability between states. We develop a measure-theoretic treatment covering both finite similarity matrices and general probability spaces, and study how the law and similarity kernel transform under measurable maps, Markov kernels (channels), and conditioning operations. This yields deterministic and channel data-processing inequalities, so a reduction in entropy quantifies how much distinguishability is lost under representation change. We also define a conditional similarity-sensitive entropy theory, give a counterexample to a recent conjecture on concavity, and identify a useful one-dimensional Laplace pullback class where concavity holds.
\end{abstract}

\noindent\textbf{Keywords:} similarity-sensitive entropy, similarity kernels, coarse-graining, data-processing inequality, Markov kernels, conditional entropy, information gain.

\section{Introduction}

Many information-theoretic functionals treat distinct states as perfectly distinguishable.
In applications, however, state labels often encode objects---records, strings, phenotypes, signals---for which distinct labels can be partially redundant.
A natural way to model this redundancy is to accompany the state space with a similarity kernel $K$ that quantifies how interchangeable two states are for the task at hand (e.g.\ derived from confusion/utility or from geometry via distance). Given $K$ over a state space with some probability law, we can calculate an associated entropy.

As background, our work builds on Leinster and Cobbold's similarity-sensitive diversities and entropies for finite sets~\cite{leinster_cobbold}; see also~\cite[Ch.~4]{leinster_entropy_diversity}. Leinster and Roff extended this framework to general measure spaces~\cite{leinster_roff_maxent}, and for broader diversity background see~\cite{hill_diversity,patil_taillie,rao_quadratic_entropy}. We focus entirely on the order-$q=1$ case, which generalizes Shannon entropy directly~\cite{shannon} (cf.\ R\'enyi and Tsallis entropies~\cite{renyi,tsallis}), and whose behavior beyond the discrete setting remains comparatively underdeveloped.

We motivate by considering two operations and their monotonicities, both of which we may want from a similarity-sensitive entropy functional, and both remain underdeveloped in the general measure-theoretic setting.

The first operation we call \emph{representation change}: one derives from $X$ a new variable on another space, either deterministically via a measurable map ("coarse-graining") or randomly via a Markov kernel. If the change mixes distinct states together, it may lose distinctions that mattered for the original task. In classical Shannon theory, one pushes the law forward and implicitly "resets" distinctions on the output space to the identity kernel. Deterministic coarse-graining may then lower entropy, noisy channels may raise or lower it, and in continuous settings even coordinate changes can alter differential entropy. Once a task kernel $K$ has been specified in the input space, this no longer appears satisfactory for our use: if the output variable is to represent the original task, then one should transport not only the probability law but also the task's similarity relationships. The resulting output entropy should reflect the task-relevant distinctions that remain after the transformation; an entropy decrease should correspond to an irreversible loss of distinctions about $X$ and an increase in distinctions relevant to the original task should be impossible.

In the second, which we'll refer to as \emph{inference}, one observes $Y$ about a fixed task variable $X$ with associated similarity kernel $K$. Here the task itself does not change: the kernel $K$ remains associated with $X$, and only the law is updated from $\mu_X$ to the posterior law of $X\mid Y$. Entropy changes are therefore interpreted as uncertainty changes about the same task. In Shannon theory, expected entropy decreases under conditioning by concavity. For similarity-sensitive entropy, the setup is the same, but concavity is no longer guaranteed. Whether conditioning reduces expected entropy depends on the kernel through the concavity of $H_K$.

Thus we are led to two questions. Under representation change, how should the original task kernel be moved through the transformation? Under inference, what can we say about the kernels for which conditioning reduces expected similarity-sensitive entropy?

We provide three main contributions:
\begin{itemize}
\item \textbf{Induced kernels and data-processing for maps and channels.}
We define the posterior-induced output kernel $K^{\mathsf{Y},\mu}$ from the joint law of $(X,Y)$, prove induced-kernel data-processing inequalities for measurable maps and Markov kernels, and show that it is the fixed-law minimal admissible output kernel.
\item \textbf{Conditional $K$-entropy and inference.}
We define a conditional $K$-entropy and $K$-information gain, recover Shannon conditional quantities for partition kernels, identify a useful one-dimensional Laplace pullback class where concavity holds, and give a counterexample showing that symmetric positive definiteness (SPD) plus the multiplicative triangle inequality (MTI) does not imply concavity in general.
\item \textbf{Measure-theoretic formulation and finite approximation.}
We give a uniform representation on $([0,1],\lambda)$ and a step-kernel/similarity-matrix approximation scheme, connecting the measure-theoretic constructions to finite similarity matrices.
\end{itemize}

We work on standard Borel measurable spaces.
Similarity kernels are measurable on product $\sigma$-algebras, symmetric, and $[0,1]$-valued with unit diagonal.
Kernel equalities and inequalities are interpreted almost everywhere with respect to the relevant product measure, and we suppress certain technical qualifiers when the reference measure is clear.

\section{Similarity-Sensitive Entropy}
\label{sec:similarity-sensitive-entropy}

This section defines similarity-sensitive entropy on general probability spaces, specializes to the finite case, and provides the partition-kernel and representation ideas used later.

\subsection{General kernelled probability spaces}
\label{sec:general-kernels}

\begin{definition}[Kernel on a measurable space]
Let $(\Omega,\mathcal{F})$ be a measurable space.
A \emph{similarity kernel} on $\Omega$ is a map
\begin{equation}
  K: \Omega\times\Omega \to [0,1]
\end{equation}
such that:
\begin{enumerate}
  \item $K$ is measurable with respect to $\mathcal{F}\otimes\mathcal{F}$;
  \item $K(\omega,\omega') = K(\omega',\omega)$ for all $\omega,\omega'$;
  \item $K(\omega,\omega) = 1$ for all $\omega$;
\end{enumerate}
\end{definition}

\begin{definition}[The typicality function]
Let $(\Omega,\mathcal{F},\mu)$ be a probability space and let $K$ be a similarity kernel on $\Omega$.
Define the \emph{typicality function} (associated to $(\mu,K)$) by
\begin{equation}
  \tau(\omega) := \int_\Omega K(\omega,\omega')\, d\mu(\omega').
\end{equation}
\end{definition}

Since $0\le K\le 1$ and $\mu$ is a probability measure, $\tau(\omega)\in[0,1]$ for all $\omega$.

\begin{definition}[Similarity-sensitive entropy on a probability space]
Let $(\Omega,\mathcal{F},\mu)$ be a probability space and let $K$ be a similarity kernel on $\Omega$, with typicality function $\tau$.
The \emph{$K$-entropy} of $\mu$ is
\begin{equation}
  H_K(\mu)
  := \int_\Omega \bigl(-\log \tau(\omega)\bigr)\, d\mu(\omega),
  \label{eq:general-K-entropy}
\end{equation}
with the convention $-\log 0 := +\infty$ when $\tau(\omega)=0$.
\end{definition}

For an $\Omega$-valued random variable $X$ with law $\mu_X$, we also write
$H_K(X):=H_K(\mu_X)$.

The value of $H_K(\mu)$ depends only on the distribution of the typicality function
$\tau(\omega)$ under $\omega\sim\mu$.
In particular, if $K'\ge K$ $(\mu\otimes\mu)$-a.e.\ then $H_K(\mu)\ge H_{K'}(\mu)$:
enlarging a kernel increases typicality and can only decrease $K$-entropy.

\subsection{Discrete similarity-sensitive entropy}
\label{sec:discrete-entropy}

For finite $\Omega=\mathsf{X}$ with pmf $p$, one has $\tau(x)=(Kp)_x$ and
\eqref{eq:general-K-entropy} reduces to \eqref{eq:discrete-K-entropy}.

\begin{definition}[Similarity matrix, typicality, and discrete $K$-entropy]
Let $\mathsf{X}$ be a finite set, let $p=(p_x)_{x\in \mathsf{X}}$ be a pmf, and let
$K\in[0,1]^{\mathsf{X}\times\mathsf{X}}$ satisfy $K_{x,x'}=K_{x',x}$ and $K_{x,x}=1$.
Define the \emph{typicality vector} $Kp$ by
\begin{equation}
  (Kp)_x := \sum_{x'\in \mathsf{X}} K_{x,x'}\, p_{x'}.
\end{equation}
The \emph{discrete $K$-entropy} is
\begin{equation}
    H_K(p)
    := - \sum_{x\in \mathsf{X}} p_x \log (Kp)_x,
    \label{eq:discrete-K-entropy}
\end{equation}
and for a random variable $X\sim p$ we write $H_K(X):=H_K(p)$.
\end{definition}

If $p_x>0$, then
\begin{equation}
0<(Kp)_x\le 1,
\end{equation}
since $(Kp)_x\ge K_{x,x}p_x=p_x$ and $(Kp)_x\le\sum_{x'}1\cdot p_{x'}=1$.
Thus $H_K(p)\in[0,\infty)$ is always well-defined in the discrete case.
If $K=I$, then $H_K(p)=H(p)$ is the Shannon entropy.
More generally, $(Kp)_x\ge p_x$ for all $x$, hence $H_K(p)\le H(p)$.

\paragraph*{Partition kernels and coarse variables (finite case).}

We now single out the 0/1 block case, where $K$ is the indicator of a coarse variable.

\begin{definition}[Partition kernel and coarse variable]
A similarity matrix $K$ on $\mathsf{X}$ is a \emph{partition kernel} if there exist an integer $m\ge 1$ and a surjection
\begin{equation}
  \pi:\mathsf{X}\to \{1,\dots,m\}
\end{equation}
such that
\begin{equation}
  K_{x,x'} = \mathbf{1}\{\pi(x)=\pi(x')\}.
\end{equation}
Equivalently, writing $C_j:=\pi^{-1}(j)$, we have a partition $\mathsf{C}=\{C_1,\dots,C_m\}$ of $\mathsf{X}$ and $K$ is constant on each block $C_j\times C_j$ and zero off the block diagonal.
Given an $\mathsf{X}$-valued random variable $X$, the associated coarse variable is
\begin{equation}
  Z := \pi(X).
\end{equation}
\end{definition}

In pullback form, writing $I_m$ for the identity kernel on $\{1,\dots,m\}$, we have
$K=I_m^\pi$, i.e.\ $K_{x,x'} = (I_m)_{\pi(x),\pi(x')}$.

\begin{proposition}[Partition kernels recover Shannon entropy]
\label{prop:partition-matrix-entropy}
Let $K$ be a partition kernel on $\mathsf{X}$ with associated map $\pi$ and coarse variable $Z=\pi(X)$. Then
\begin{equation}
  H_K(X) = H(Z),
\end{equation}
where $H(Z)$ is the Shannon entropy of $Z$.
\end{proposition}

\begin{proof}
Write $q_j:=\mathbb{P}(Z=j)=\sum_{x:\,\pi(x)=j} p_x$. Then
$(Kp)_x=\sum_{x':\,\pi(x')=\pi(x)}p_{x'}=q_{\pi(x)}$, so
$H_K(X)=-\sum_x p_x\log q_{\pi(x)}=-\sum_j q_j\log q_j=H(Z)$.
\end{proof}

\subsection{Isomorphisms and uniform representations}
\label{sec:isomorphisms}

\begin{definition}[Isomorphism]
Let $(\Omega,\mathcal{F},\mu,K)$ and $(\Omega',\mathcal{F}',\mu',K')$
be probability spaces with similarity kernels.
An \emph{isomorphism} is a measurable map $\phi:\Omega\to\Omega'$ such that
$\phi_\#\mu=\mu'$, $\phi$ is a bijection with measurable inverse modulo null sets, and
\begin{equation}
  K'(\phi(\omega),\phi(\omega'))=K(\omega,\omega')
  \quad\text{for }(\mu\otimes\mu)\text{-a.e.\ }(\omega,\omega').
\end{equation}
\end{definition}

\begin{lemma}[Pullback identity for arbitrary output kernels]
\label{lem:pullback-identity-any-output-kernel}
Let $(\Omega,\mathcal{F},\mu)$ be a probability space, let
$f:\Omega\to(\mathsf{Y},\mathcal{F}_Y)$ be measurable, write $\mu_Y:=f_\#\mu$, and let
$L$ be any similarity kernel on $(\mathsf{Y},\mathcal{F}_Y)$.
Define its pullback along $f$ by
\begin{equation}
L^f(\omega,\omega'):=L(f(\omega),f(\omega')).
\end{equation}
Define typicality functions
\begin{equation}
\tau_L(y):=\int_{\mathsf{Y}}L(y,y')\,d\mu_Y(y'),
\qquad
\tau_{L^f}(\omega):=\int_\Omega L^f(\omega,\omega')\,d\mu(\omega').
\end{equation}
Then
\begin{equation}
\tau_{L^f}(\omega)=\tau_L(f(\omega))
\end{equation}
and consequently
\begin{equation}
H_{L^f}(\mu)=H_L(\mu_Y).
\end{equation}
\end{lemma}

\begin{proof}
By definition of pullback and $\mu_Y=f_\#\mu$,
\begin{equation}
\tau_{L^f}(\omega)
=\int_\Omega L(f(\omega),f(\omega'))\,d\mu(\omega')
=\int_{\mathsf{Y}}L(f(\omega),y')\,d\mu_Y(y')
=\tau_L(f(\omega))
\end{equation}
for every $\omega$. Therefore
\begin{equation}
H_{L^f}(\mu)
=-\int_\Omega\log\tau_{L^f}(\omega)\,d\mu(\omega)
=-\int_\Omega\log\tau_L(f(\omega))\,d\mu(\omega)
=-\int_{\mathsf{Y}}\log\tau_L(y)\,d\mu_Y(y)
=H_L(\mu_Y).
\end{equation}
\end{proof}

The monotonicity of $H_K$ under kernel enlargement and the pullback identity above
are the main mechanism behind the data-processing results below.
If an output kernel $L$ pulls back along a representation map to a kernel $L^f$ with
$L^f\ge K$, then typicalities increase on the input space, so $K$-entropy cannot increase,
while Lemma~\ref{lem:pullback-identity-any-output-kernel} identifies the entropy of
the pullback kernel with the entropy of the output law.
The induced-kernel construction supplies the smallest output kernel with this domination
property for the joint law under consideration.

\begin{proposition}[Invariance under isomorphism]
\label{prop:invariance-isomorphism}
If $(\Omega,\mu,K)$ and $(\Omega',\mu',K')$ are isomorphic, then
\begin{equation}
  H_K(\mu) = H_{K'}(\mu').
\end{equation}
\end{proposition}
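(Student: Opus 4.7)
The plan is to show that the typicality functions satisfy $\tau'\circ\phi = \tau$ $\mu$--almost everywhere, and then conclude by a standard pushforward change of variables applied to $-\log \tau'$. Concretely, I would argue as follows.

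First I would observe that since $\phi_\#\mu = \mu'$, the product map $\phi\times\phi:\Omega\times\Omega\to\Omega'\times\Omega'$ satisfies $(\phi\times\phi)_\#(\mu\otimes\mu)=\mu'\otimes\mu'$. Combining this with condition~(3) of the isomorphism definition (which already gives $K'(\phi(\omega),\phi(\omega'))=K(\omega,\omega')$ for $\mu\otimes\mu$--a.e.\ $(\omega,\omega')$), Fubini's theorem implies that there is a $\mu$--null set $E\subset\Omega$ such that for every $\omega\notin E$, the equality $K'(\phi(\omega),\phi(\omega'))=K(\omega,\omega')$ holds for $\mu$--a.e.\ $\omega'$.

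Next I would compute the typicality function at $\phi(\omega)$ for $\omega\notin E$. By the change of variables formula for pushforwards,
\[
  \tau'(\phi(\omega))
  = \int_{\Omega'} K'(\phi(\omega),y')\,d\mu'(y')
  = \int_{\Omega} K'(\phi(\omega),\phi(\omega'))\,d\mu(\omega')
  = \int_{\Omega} K(\omega,\omega')\,d\mu(\omega')
  = \tau(\omega),
\]
where the third equality uses the $\omega'$--a.e.\ equality from the previous step. Thus $\tau'\circ\phi = \tau$ $\mu$--a.e., and in particular $\tau'>0$ $\mu'$--a.e.\ (since $\tau>0$ $\mu$--a.e.\ and $\phi_\#\mu=\mu'$), so $\log\tau'$ is well--defined $\mu'$--a.e.

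Finally, applying the pushforward formula once more to the integrand $-\log\tau'$ gives
\[
  H_{K'}(\mu')
  = -\int_{\Omega'}\log\tau'(y)\,d\mu'(y)
  = -\int_{\Omega}\log\tau'(\phi(\omega))\,d\mu(\omega)
  = -\int_{\Omega}\log\tau(\omega)\,d\mu(\omega)
  = H_K(\mu),
\]
with both sides interpreted as elements of $[0,\infty]$. The main obstacle, such as it is, is bookkeeping: one must use Fubini to turn the $\mu\otimes\mu$--a.e.\ kernel identity into a statement valid for $\mu$--a.e.\ $\omega$ after integrating in $\omega'$, and verify that the null sets arising from condition~(3) and from positivity of $\tau$ are absorbed by the single $\mu$--null exceptional set. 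The bijectivity clause~(2) is not needed for this direction; only $\phi_\#\mu=\mu'$ and the a.e.\ kernel identity are used.
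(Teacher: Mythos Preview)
Your proof is correct and follows the same approach as the paper: establish $\tau'\circ\phi=\tau$ $\mu$--a.e.\ and then apply the pushforward change of variables to $-\log\tau'$. In fact you are more careful than the paper, which simply asserts the typicality identity ``as in the earlier proof''; your explicit use of Fubini to pass from the $\mu\otimes\mu$--a.e.\ kernel equality to a $\mu$--a.e.\ statement in the first variable is exactly the detail that fills that gap, and your observation that clause~(2) (bijectivity) is not needed here is correct.
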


\begin{proof}
Let $\phi$ be an isomorphism, and define the pullback kernel $(K')^\phi$ on $\Omega$ by
\begin{equation}
  (K')^\phi(\omega,\omega') := K'(\phi(\omega),\phi(\omega')).
\end{equation}
By definition of isomorphism, $(K')^\phi(\omega,\omega')=K(\omega,\omega')$
for $(\mu\otimes\mu)$-a.e.\ $(\omega,\omega')$.
By Fubini, the typicality functions of $K$ and $(K')^\phi$ agree $\mu$-a.e., hence
$H_K(\mu)=H_{(K')^\phi}(\mu)$.
Applying Lemma~\ref{lem:pullback-identity-any-output-kernel} with $f=\phi$ and $L=K'$ gives
\[
  H_{(K')^\phi}(\mu) = H_{K'}(\phi_\#\mu) = H_{K'}(\mu').
\]
\hfill\qedsymbol
\end{proof}

\begin{theorem}[Uniform representation]
\label{thm:uniform-representation}
Let $(\Omega,\mathcal{F},\mu,K)$ be a standard probability space with
kernel $K$. Then there exists a measurable map
	$\psi:([0,1],\mathcal{B},\lambda) \to (\Omega,\mathcal{F})$ such that
	$\psi_\#\lambda=\mu$ (equivalently, if $U\sim\mathrm{Unif}[0,1]$ then $\psi(U)\sim\mu$).
	Define the pullback of $K$ along $\psi$ by
\begin{equation}
	  K^\psi(u,u') := K(\psi(u),\psi(u')).
\end{equation}
	Write $\tilde K:=K^\psi$. Then $\tilde K$ is a kernel on $([0,1],\lambda)$ and
\begin{equation}
	  H_K(\mu) = H_{\tilde K}(\lambda).
\end{equation}
If in addition $\mu$ is atomless, $\psi$ may be chosen to be a measure-preserving
isomorphism, in which case $( [0,1],\lambda,\tilde K)$ is isomorphic to $(\Omega,\mu,K)$.
\end{theorem}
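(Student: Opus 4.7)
The plan is to produce $\psi$ from classical facts about standard probability spaces, pull $K$ back to $[0,1]$ through $\psi$, and match the two entropies by a change-of-variables calculation. First I would invoke the standard result that every standard probability space $(\Omega,\mathcal{F},\mu)$ admits a measurable map $\psi:([0,1],\lambda)\to(\Omega,\mathcal{F})$ with $\psi_\#\lambda=\mu$. In the atomless case this is the Borel isomorphism theorem for atomless standard probability spaces, which in fact yields a measure-preserving isomorphism mod null sets $[0,1]\setminus N_0 \to \Omega\setminus N$; for general $\mu$ one handles atoms separately, reserving one sub-interval of $[0,1]$ per atom of $\mu$ (of Lebesgue mass equal to the atom's $\mu$-mass, collapsed to that atom) and mapping the remainder onto the atomless part of $\mu$ via the atomless case.

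Next I would verify that $\widetilde K:=K\circ(\psi\times\psi)$ is a similarity kernel on $([0,1],\lambda)$: joint measurability is immediate from composition of measurable maps, while symmetry and the diagonal condition $\widetilde K(u,u)=1$ transfer directly from $K$. For the typicality, the pushforward change-of-variables formula applied to the measurable section $K(\psi(u),\cdot)$ for each fixed $u$ gives
\[
  \widetilde\tau(u) := \int_0^1 K(\psi(u),\psi(u'))\,d\lambda(u') = \int_\Omega K(\psi(u),\omega')\,d\mu(\omega') = \tau(\psi(u)),
\]
so $\widetilde\tau>0$ $\lambda$-a.e., since $\{\widetilde\tau=0\}=\psi^{-1}(\{\tau=0\})$ is the $\psi$-preimage of a $\mu$-null set and $\psi_\#\lambda=\mu$. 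A second change of variables applied to $\log\circ\tau$ (interpreted in $[0,\infty]$) then yields
\[
  H_{\widetilde K}(\lambda) = -\int_0^1 \log\widetilde\tau(u)\,d\lambda(u) = -\int_\Omega \log\tau(\omega)\,d\mu(\omega) = H_K(\mu).
\]

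In the atomless case, the isomorphism $\psi$ obtained above satisfies condition (3) of the definition of an isomorphism of kernelled probability spaces with equality everywhere by construction of $\widetilde K$, so $(\Omega,\mu,K) \cong ([0,1],\lambda,\widetilde K)$ and the entropy equality is then also an instance of Proposition~\ref{prop:invariance-isomorphism}. I do not anticipate a substantive obstacle; the proof is an assembly of standard facts. A couple of details deserve care: measurability of $\widetilde\tau$ comes from Fubini--Tonelli applied to the jointly measurable nonnegative function $\widetilde K$, and the change of variables for $\log\tau$ must be taken in the $[0,\infty]$-valued sense since $H_K(\mu)$ need not be finite. Finally, when $\mu$ has atoms an isomorphism is genuinely unavailable (an atom has no positive-Lebesgue-mass pre-image at a point of $[0,1]$), which is why the statement only asks for a measure-preserving surjection in the general case.
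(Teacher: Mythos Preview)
Your proposal is correct and follows essentially the same approach as the paper: invoke the standard existence of a measure-preserving map $\psi$ from $([0,1],\lambda)$ onto a standard probability space, show $\widetilde\tau(u)=\tau(\psi(u))$ by change of variables, and then push forward $\log\tau$ to obtain $H_{\widetilde K}(\lambda)=H_K(\mu)$, with the atomless isomorphism case handled by the Borel isomorphism theorem. Your write-up is slightly more detailed (explicitly checking the kernel axioms for $\widetilde K$, explaining the atom/interval construction, and noting why an isomorphism is unavailable when $\mu$ has atoms), but there is no substantive difference in strategy.
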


\begin{proof}
Since $(\Omega,\mathcal{F},\mu)$ is a standard probability space, there exists a measurable map
$\psi:([0,1],\mathcal{B},\lambda)\to(\Omega,\mathcal{F})$ with
$\psi_\#\lambda=\mu$ (see \cite[Lem.~2.22, p.~34]{kallenberg_foundations}).
Apply Lemma~\ref{lem:pullback-identity-any-output-kernel} with $f=\psi$ and $L=K$
to obtain $H_{\tilde K}(\lambda)=H_K(\psi_\#\lambda)=H_K(\mu)$.
If $\mu$ is atomless, then $(\Omega,\mathcal{F},\mu)$ is isomorphic to
$([0,1],\mathcal{B},\lambda)$ (see \cite[Thm.~9.4.7, p.~283]{bogachev_measure_theory_2}), and we may choose
$\psi$ to be such an isomorphism.
\end{proof}

The similarity kernel allows entropy to be coordinate-free, letting the notion of similarity in one space be transferred to another without arbitrary changes to entropy. In practice, Theorem~\ref{thm:uniform-representation} lets us work without loss of generality on $([0,1],\lambda)$, so questions about $H_K(\mu)$ reduce to questions about $H_{\tilde K}(\lambda)$ for the pullback kernel $\tilde K$. A single uniform seed $U\sim\mathrm{Unif}[0,1]$ supplies the randomness, while the details encoding the state differences are captured by $\tilde K$.

\begin{remark}[Transport equivalence (atomless case)]
\label{rem:transport-equivalence}
When $\mu$ is atomless, the uniform representation in Theorem~\ref{thm:uniform-representation} is canonical only up to a measure-preserving relabeling of $[0,1]$.
For kernels $K_1,K_2$ on $([0,1],\lambda)$ we write $K_1\sim K_2$ if there exists a measure-preserving isomorphism $T$ of $([0,1],\lambda)$ such that
\begin{equation}
K_2(u,u') = K_1(T^{-1}(u),T^{-1}(u'))
\quad\text{for }(\lambda\otimes\lambda)\text{-a.e.\ }(u,u').
\end{equation}
We write $[K]_\sim$ for the transport-equivalence class of a kernel $K$ on $([0,1],\lambda)$.
Any two atomless uniform representations of a fixed kernelled probability space yield transport-equivalent kernels, and conversely $K_1\sim K_2$ if and only if $([0,1],\lambda,K_1)$ and $([0,1],\lambda,K_2)$ are isomorphic.
\end{remark}

When $\mu$ has atoms, Theorem~\ref{thm:uniform-representation} still gives a measurable pushforward representation
$\psi_\#\lambda=\mu$, but not a measure-preserving isomorphism, since $([0,1],\lambda)$ is atomless.
One may ``split'' atoms by passing to a larger space
$\tilde\Omega$ with a measurable map $\pi:\tilde\Omega\to\Omega$ and the pullback of $K$ along $\pi$,
$\tilde K(\tilde\omega,\tilde\omega'):=K(\pi(\tilde\omega),\pi(\tilde\omega'))$.
This need not be an isomorphism, but this lift does not change entropy beyond the pushforward law:
for every probability measure $\tilde\nu$ on $\tilde\Omega$,
\begin{equation}
H_{\tilde K}(\tilde\nu)=H_K(\pi_\#\tilde\nu),
\end{equation}
a special case of Lemma~\ref{lem:pullback-identity-any-output-kernel}.

\section{Posterior-Induced Output Kernels}
\label{sec:posterior-induced-output-kernels}

This section defines the output similarity kernel associated with a joint law of $(X,Y)$. The same construction will be used for deterministic maps $Y=f(X)$ and for Markov kernels $Y\mid X\sim P(\cdot\mid X)$.

\begin{definition}[Posterior-induced output kernel]
\label{def:posterior-induced-output-kernel}
Let $X$ take values in a task space $\Omega$ equipped with similarity kernel $K$, let $Y$ take values in a measurable space $(\mathsf{Y},\mathcal{F}_Y)$, let $\mu_{XY}$ be the joint law of $(X,Y)$, and let $\{\mu_{X\mid Y=y}\}_{y\in\mathsf{Y}}$ be a regular conditional law of $X$ given $Y$.
Define the posterior-induced output kernel $K^{\mathsf{Y},\mu_{XY}}:\mathsf{Y}\times \mathsf{Y}\to[0,1]$, up to $\mu_Y\otimes\mu_Y$-a.e.\ equality, by
\begin{equation}
K^{\mathsf{Y},\mu_{XY}}(y,y')
=
\begin{cases}
1, & y=y',\\[4pt]
\operatorname*{ess\,sup}_{(\omega,\omega')\sim \mu_{X\mid Y=y}\otimes \mu_{X\mid Y=y'}}
K(\omega,\omega'),
& y\neq y'.
\end{cases}
\end{equation}
When the joint law is clear from context, we abbreviate $K^{\mathsf{Y},\mu_{XY}}$ to $K^{\mathsf{Y},\mu}$.
\end{definition}

\begin{definition}[Admissible output kernels for a joint law]
\label{def:joint-law-admissible-kernel}
Let $(X,Y)$ and $(X',Y')$ be independent draws from a joint law $\mu_{XY}$, with common marginal $\mu_Y$ on $\mathsf{Y}$.
A $\mathcal{F}_Y\otimes\mathcal{F}_Y$-measurable kernel $L:\mathsf{Y}\times \mathsf{Y}\to[0,1]$ is called \emph{$\mu_{XY}$-admissible} if
\begin{equation}
  L(Y,Y') \ge K(X,X')
  \quad\text{almost surely.}
\end{equation}
Kernel comparisons on $\mathsf{Y}\times\mathsf{Y}$ are understood in the
$\mu_Y\otimes\mu_Y$-a.e.\ partial order:
$L_1\preceq L_2$ iff $L_1(y,y')\le L_2(y,y')$ for
$(\mu_Y\otimes\mu_Y)$-a.e.\ $(y,y')$.
\end{definition}

\begin{proposition}[Posterior-induced kernel is fixed-law minimal]
\label{prop:posterior-induced-fixed-law-minimality}
Let $K^{\mathsf{Y},\mu_{XY}}$ be the posterior-induced output kernel from Definition~\ref{def:posterior-induced-output-kernel}. Then a measurable version of $K^{\mathsf{Y},\mu_{XY}}$ exists, and it has the following properties:
\begin{enumerate}
\item $K^{\mathsf{Y},\mu_{XY}}$ is $\mu_{XY}$-admissible;
\item if $L$ is any $\mu_{XY}$-admissible output kernel, then
\begin{equation}
K^{\mathsf{Y},\mu_{XY}}\le L
\quad\text{for }(\mu_Y\otimes\mu_Y)\text{-a.e. }(y,y').
\end{equation}
\end{enumerate}
Consequently, $K^{\mathsf{Y},\mu_{XY}}$ is the minimal $\mu_{XY}$-admissible output kernel in the $\mu_Y\otimes\mu_Y$-a.e.\ order, hence unique up to $\mu_Y\otimes\mu_Y$-a.e.\ equality.
\end{proposition}

\begin{proof}
Choose a measurable representative of $K^{\mathsf{Y},\mu_{XY}}$ as in
Appendix~\ref{sec:appendix-measurability-induced-kernel}.

\emph{Admissibility.} Under the product law $\mu_{XY}\otimes\mu_{XY}$ of two independent draws $((X,Y),(X',Y'))$, the conditional law of $(X,X')$ given $(Y,Y')=(y,y')$ is $\mu_{X\mid Y=y}\otimes\mu_{X\mid Y=y'}$.
Therefore, for $\mu_Y\otimes\mu_Y$-a.e.\ $(y,y')$,
\begin{equation}
(\mu_{X\mid Y=y}\otimes\mu_{X\mid Y=y'})(\{K>K^{\mathsf{Y},\mu_{XY}}(y,y')\})=0,
\end{equation}
by the definition of the essential supremum. Integrating over $(Y,Y')$ gives
\begin{equation}
  K^{\mathsf{Y},\mu_{XY}}(Y,Y') \ge K(X,X')
  \quad\text{almost surely,}
\end{equation}
so $K^{\mathsf{Y},\mu_{XY}}$ is $\mu_{XY}$-admissible.

\emph{Fixed-law minimality.} Let $L$ be $\mu_{XY}$-admissible and suppose for contradiction that
$\mu_Y\otimes\mu_Y(\{(y,y'):\ L(y,y')<K^{\mathsf{Y},\mu_{XY}}(y,y')\})>0$.
Since both kernels have value $1$ on the diagonal, the failure set may be restricted to
$\{(y,y'):\ y\neq y'\}$.
Then there exists $q\in\mathbb{Q}\cap[0,1]$ such that
$E_q:=\{(y,y'):\ y\neq y',\ L(y,y')<q<K^{\mathsf{Y},\mu_{XY}}(y,y')\}$ has positive $\mu_Y\otimes\mu_Y$-measure.
For $(y,y')\in E_q$, the inequality $q<K^{\mathsf{Y},\mu_{XY}}(y,y')$ implies
$(\mu_{X\mid Y=y}\otimes\mu_{X\mid Y=y'})(\{K>q\})>0$, hence
$(\mu_{X\mid Y=y}\otimes\mu_{X\mid Y=y'})(\{K>L(y,y')\})>0$ as well.
Integrating over $E_q$ shows that the event $\{K(X,X')>L(Y,Y')\}$ has positive probability under two independent draws from $\mu_{XY}$, contradicting the assumption that $L$ is $\mu_{XY}$-admissible.
Therefore $K^{\mathsf{Y},\mu_{XY}}\le L$ $(\mu_Y\otimes\mu_Y)$-a.e. Uniqueness follows from minimality.
\end{proof}

\begin{remark}[Isomorphic case]
\label{rem:posterior-induced-isomorphic-case}
If $Y=f(X)$ and $f$ is an isomorphism modulo null sets, with measurable inverse $g$, then
\begin{equation}
  \mu_{X\mid Y=y}=\delta_{g(y)}
  \quad\text{for }\mu_Y\text{-a.e. }y.
\end{equation}
Hence
\begin{equation}
  K^{\mathsf{Y},\mu}(y,y')=K(g(y),g(y'))
\end{equation}
for $(\mu_Y\otimes\mu_Y)$-a.e.\ $(y,y')$. Thus in the isomorphic case the posterior-induced kernel is just the transported copy of $K$, and the coarse-graining inequality below reduces to the invariance statement of Proposition~\ref{prop:invariance-isomorphism}.
\end{remark}

\section{Deterministic Coarse-Graining and Data-Processing}
\label{sec:deterministic-coarse-graining}

The pushforward law $\mu_Y=f_\#\mu$ records how often each coarsened output occurs, but it does not by itself provide a similarity kernel on $\mathsf{Y}$ for general $f$. To relate entropies across representations, we use the posterior-induced output kernel of Section~\ref{sec:posterior-induced-output-kernels}.

\subsection{Fixed-law induced kernel and DPI}

Let $(\Omega,\mathcal{F},\mu,K)$ be a probability space with kernel $K$, let $f:\Omega\to(\mathsf{Y},\mathcal{F}_Y)$ be measurable, let $X\sim\mu$, and set $Y=f(X)$. Let $\mu_Y=f_\#\mu$.

Applying Definition~\ref{def:posterior-induced-output-kernel} to the deterministic joint law of $(X,Y)$ gives the output kernel $K^{\mathsf{Y},\mu}$.

In the deterministic case, $\mu_{XY}$-admissibility is exactly pullback domination. Indeed, for an output kernel $L$ on $\mathsf{Y}$,
\begin{equation}
  L(Y,Y')\ge K(X,X')
  \quad\text{for independent }X,X'\sim\mu
\end{equation}
is equivalent to
\begin{equation}
  L(f(\omega),f(\omega'))\ge K(\omega,\omega')
  \quad
  (\mu\otimes\mu)\text{-a.e. }(\omega,\omega').
\end{equation}
Thus Proposition~\ref{prop:posterior-induced-fixed-law-minimality} says that $K^{\mathsf{Y},\mu}$ is the minimal output kernel whose pullback along $f$ dominates $K$ for the fixed pair $(\mu,f)$.

Figure~\ref{fig:fiberwise_max} summarizes the essential-supremum construction and its pullback.

\begin{figure}[t]
  \centering
		  \begin{tikzpicture}[
		    >=latex,
		    font=\footnotesize,
		    line cap=round,
        fiber/.style={draw=black!70, rounded corners=4pt, fill=black!3, thick, minimum width=3.0cm, minimum height=2.0cm},
		    point/.style={circle, fill=black, inner sep=1.1pt},
		    sim/.style={draw=black!35, line width=0.45pt},
		    max/.style={draw=black, line width=1.1pt},
		    map/.style={->, draw=black!45, line width=0.55pt, shorten >= 3pt, shorten <= 3pt},
		    labelbox/.style={fill=white, fill opacity=0.95, text opacity=1, inner sep=2pt, rounded corners=2pt},
		  ]

		    % Layout anchors: top Omega, middle Y, bottom Omega (back-composed)
		    \coordinate (topL) at (0, 3.4);
		    \coordinate (topR) at (5.3, 3.4);
		    \coordinate (midL) at (0, 0.8);
		    \coordinate (midR) at (5.3, 0.8);
		    \coordinate (midC) at ($(midL)!0.5!(midR)$);
		    \coordinate (botL) at (0, -1.8);
		    \coordinate (botR) at (5.3, -1.8);

	    % Helper: vertical center line for label alignment.
	    \path[name path=centerVert] ($(midC)+(0,6)$) -- ($(midC)+(0,-6)$);

		    % Space labels
		    \node[anchor=east, font=\large] at ($(topL)+(-2.6,0.00)$) {$\Omega$};
		    \node[anchor=east, font=\large] at ($(midL)+(-2.6,0.00)$) {$\mathsf{Y}$};
		    \node[anchor=east, font=\large] at ($(botL)+(-2.6,0.00)$) {$\Omega$};
	    \node[anchor=east, font=\scriptsize, black!70, align=right] at ($(topL)+(-2.6,-0.45)$) {$K:\Omega\times\Omega\to[0,1]$};
	    \node[anchor=east, font=\scriptsize, black!70, align=right] at ($(midL)+(-2.6,-0.45)$) {$K^{\mathsf{Y},\mu}:\mathsf{Y}\times\mathsf{Y}\to[0,1]$};
	    \node[anchor=east, font=\scriptsize, black!70, align=right] at ($(botL)+(-2.6,-0.45)$) {$K^{f,\mu}:\Omega\times\Omega\to[0,1]$};

    % --- Top: original kernel on Omega, with ess sup across fibers ---
    \node[fiber] (F1) at (topL) {};
    \node[fiber] (F2) at (topR) {};
    \node[anchor=south, font=\small] at (F1.north) {$f^{-1}(y)$};
    \node[anchor=south, font=\small] at (F2.north) {$f^{-1}(y')$};

    % Points are representatives in each fiber.
    \node[point] (x1) at ($(F1.center)+(-1.20,0.85)$) {};
    \node[point] (x2) at ($(F1.center)+(-0.35,-0.10)$) {};

	    \node[point] (u1) at ($(F2.center)+(1.10,0.75)$) {};
	    \node[point] (u2) at ($(F2.center)+(0.35,-0.15)$) {};
	    \node[point] (u3) at ($(F2.center)+(-0.95,-0.70)$) {};

	    % Similarity links (schematic: lighter/thinner = smaller similarity)
	    \draw[sim, draw=black!20, line width=0.35pt] (x1) -- (u2);
	    \draw[sim, draw=black!45, line width=0.55pt] (x1) -- (u3);

		    \draw[max, name path=essLine] (x2) -- (u1);
		    \path[name intersections={of=essLine and centerVert, by={essLabel}}];
		    \node[above=7pt, labelbox, font=\scriptsize] at (essLabel) {$\operatorname*{ess\,sup} K(\omega,\omega')$};

    % --- Middle: induced kernel on Y ---
		    \node[point] (yy) at (midL) {};
		    \node[point] (yyp) at (midR) {};
		    \node[font=\scriptsize, anchor=east, xshift=-6pt] at (yy) {$y$};
		    \node[font=\scriptsize, anchor=west, xshift=6pt] at (yyp) {$y'$};
	    \draw[max] (yy) -- (yyp)
	      node[midway, above=2pt, labelbox]
	      {$K^{\mathsf{Y},\mu}(y,y')$};

    % f: Omega -> Y
    \draw[map] (F1.south) -- (yy.north);
    \draw[map] (F2.south) -- (yyp.north);
    \node[labelbox, font=\scriptsize, text=black!70] at ($(midC)+(0,0.95)$) {$f:\Omega\to\mathsf{Y}$};

    % --- Bottom: pullback/back-composition of the induced kernel ---
    \node[fiber] (G1) at (botL) {};
    \node[fiber] (G2) at (botR) {};
    % Label the fibers outside the back-composed boxes to avoid overlap with the schematic links.
    \node[anchor=north, font=\small, yshift=-1pt] at (G1.south) {$f^{-1}(y)$};
    \node[anchor=north, font=\small, yshift=-1pt] at (G2.south) {$f^{-1}(y')$};

    % Use the same representative points/pairs as above (emphasizing that $K^{f,\mu}$ rewrites their similarities).
    \node[point] (a1) at ($(G1.center)+(-1.20,0.85)$) {};
    \node[point] (a2) at ($(G1.center)+(-0.35,-0.10)$) {};
    \node[point] (b1) at ($(G2.center)+(1.10,0.75)$) {};
    \node[point] (b2) at ($(G2.center)+(0.35,-0.15)$) {};
    \node[point] (b3) at ($(G2.center)+(-0.95,-0.70)$) {};

    % Back-composition makes the similarity constant across the fiber block:
    % draw several representative pairs (not one-to-one) with identical styling.
	    \draw[max] (a1) -- (b2);
	    \draw[max] (a1) -- (b3);
	    \draw[max, name path=pullLine] (a2) -- (b1);
	    \path[name intersections={of=pullLine and centerVert, by={pullLabel}}];
	    \node[above=7pt, labelbox, fill opacity=0.35, inner sep=1.0pt, font=\scriptsize] at (pullLabel) {$K^{f,\mu}(\omega,\omega')$};

    % Pullback from Y to Omega (schematic)
    \draw[map] (yy.south) -- (G1.north);
    \draw[map] (yyp.south) -- (G2.north);
    \node[labelbox, align=center, font=\scriptsize, text=black!70] at ($(midC)+(0,-0.85)$) {$L\mapsto L^f$\\(pullback)};

  \end{tikzpicture}
  \caption{Given a similarity kernel $K:\Omega\times\Omega\to[0,1]$ and a measurable map $f:\Omega\to\mathsf{Y}$, the induced output kernel $K^{\mathsf{Y},\mu}$ is defined by the fiberwise essential supremum of $K$ (schematically: lighter links indicate smaller similarity). Pulling $K^{\mathsf{Y},\mu}$ back along $f$ yields the back-composed kernel $K^{f,\mu}=(K^{\mathsf{Y},\mu})^f$ on $\Omega$, which is constant on fiber blocks.}
  \label{fig:fiberwise_max}
\end{figure}

\paragraph*{Back-composition preserves typicality and entropy.}
Apply Lemma~\ref{lem:pullback-identity-any-output-kernel} to the induced output kernel
$L:=K^{\mathsf{Y},\mu}$ (so that $L^f=K^{f,\mu}$). Then the corresponding typicality functions satisfy
\begin{equation}
  \tau_{K^{f,\mu}}(\omega)=\tau_{K^{\mathsf{Y},\mu}}(f(\omega)),
\end{equation}
and consequently $H_{K^{f,\mu}}(\mu)=H_{K^{\mathsf{Y},\mu}}(\mu_Y)$.

\begin{theorem}[Coarse-graining inequality for measurable maps]
\label{thm:coarse-graining-general}
Let $(\Omega,\mu,K)$ and $f$ be as above, let $\mu_Y:=f_\#\mu$, let $K^{\mathsf{Y},\mu}$ be the posterior-induced output kernel for $Y=f(X)$, and write $K^{f,\mu}:=(K^{\mathsf{Y},\mu})^f$ for its pullback.
\begin{equation}
  H_K(\mu) \;\ge\; H_{K^{f,\mu}}(\mu) = H_{K^{\mathsf{Y},\mu}}(\mu_Y).
\end{equation}
\end{theorem}

\begin{proof}
Let $\tau$ and $\tau^f$ denote the typicality functions of $K$ and $K^{f,\mu}$ under $\mu$.
By Proposition~\ref{prop:posterior-induced-fixed-law-minimality}, $K^{\mathsf{Y},\mu}$ is admissible for the deterministic joint law of $(X,f(X))$, which is exactly the statement that $K^{f,\mu}\ge K$ $(\mu\otimes\mu)$-a.e.
By Fubini's theorem this implies $\tau^f(\omega)\ge \tau(\omega)$ for $\mu$-a.e.\ $\omega$, hence $-\log\tau^f(\omega)\le -\log\tau(\omega)$ and $H_K(\mu)\ge H_{K^{f,\mu}}(\mu)$.
The equality $H_{K^{f,\mu}}(\mu) = H_{K^{\mathsf{Y},\mu}}(\mu_Y)$ follows from
Lemma~\ref{lem:pullback-identity-any-output-kernel} applied to $L:=K^{\mathsf{Y},\mu}$
(recall $K^{f,\mu}=(K^{\mathsf{Y},\mu})^f$).
\end{proof}

Theorem~\ref{thm:coarse-graining-general} says that deterministic representation change cannot increase similarity-sensitive entropy once the output similarity structure is transported by the induced kernel.
The entropy loss $H_K(\mu)-H_{K^{\mathsf{Y},\mu}}(f_\#\mu)$ is therefore a nonnegative distinguishability loss under the map $f$.

We call this a data-processing inequality by analogy with the classical DPI for mutual information~\cite{cover_thomas} and $f$-divergences~\cite[Thm.~2, p.~138]{ali_silvey}; here the monotone quantity is marginal $K$-entropy rather than a relative functional, and monotonicity requires transporting the similarity kernel along $f$.
When concavity holds, a classical-style mutual-information DPI follows as well (Corollary~\ref{cor:mi-dpi-concave}); we discuss the connection further in Section~\ref{sec:discussion}.

\subsection{Law-independent deterministic rules}

We now ask for a stronger kind of deterministic transport rule: one depending only on $(K,f)$, not on the input law, and guaranteeing the data-processing inequality for every $\mu$.
A two-point calculation gives the necessary lower bound.

\begin{lemma}[Monotonicity in the two-point case]
\label{lem:two-point-monotone}
Let $\mathsf{X}=\{1,2\}$, let $p=(1/2,1/2)$, and consider the family of kernels
\begin{equation}
  K(m) :=
  \begin{pmatrix}
    1 & m\\
    m & 1
  \end{pmatrix},
  \qquad m\in[0,1].
\end{equation}
Then
\begin{equation}
  H_{K(m)}(p)
  = \log\frac{2}{1+m},
\end{equation}
and in particular the map $m\mapsto H_{K(m)}(p)$ is strictly decreasing
on $[0,1]$.
\end{lemma}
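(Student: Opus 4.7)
The plan is to proceed by direct computation, since the lemma is essentially asserting the value of a one-variable function and then verifying monotonicity from that closed form. There is no conceptual obstacle here; the content is just making sure the symmetry of the uniform two-point input collapses the typicality vector to a single value.

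First I would compute the typicality vector $K(m)p$. Because $p=(1/2,1/2)$ and the two rows of $K(m)$ are $(1,m)$ and $(m,1)$, both row sums against $p$ equal $\tfrac12(1+m)$, so $(K(m)p)_1=(K(m)p)_2=(1+m)/2$. Substituting into the defining formula \eqref{eq:discrete-K-entropy} gives
\[
H_{K(m)}(p) \;=\; -\tfrac12\log\!\tfrac{1+m}{2}-\tfrac12\log\!\tfrac{1+m}{2}
\;=\; -\log\!\tfrac{1+m}{2}\;=\;\log\!\tfrac{2}{1+m},
\]
which is the claimed identity. Note $(1+m)/2\in[1/2,1]>0$ on $[0,1]$, so the logarithm is finite.

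For strict monotonicity, I would write $H_{K(m)}(p)=\log 2-\log(1+m)$ and observe that $m\mapsto \log(1+m)$ is strictly increasing on $[0,1]$ (its derivative $1/(1+m)$ is strictly positive there), so the stated entropy is strictly decreasing in $m$. The main (minor) thing to double-check is that the equality case at $m=1$ gives $H_{K(1)}(p)=0$, which matches the intuition that when the two states are maximally similar the kernel entropy collapses, and at $m=0$ gives $H_{K(0)}(p)=\log 2$, the usual Shannon entropy of a fair coin.
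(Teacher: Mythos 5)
Your computation is correct and follows essentially the same route as the paper: compute the typicality vector $K(m)p=\bigl(\tfrac12(1+m),\tfrac12(1+m)\bigr)$, substitute into the definition of $H_K$, and read off strict monotonicity of $\log\frac{2}{1+m}$ directly. The endpoint checks at $m=0$ and $m=1$ are a harmless extra sanity check not present in the paper's proof.
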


\begin{proof}
For $p=(1/2,1/2)$ we have $K(m)p = \bigl(\tfrac12(1+m),\tfrac12(1+m)\bigr)$, so
\begin{equation}
  H_{K(m)}(p)
  = -\log\Bigl(\tfrac12(1+m)\Bigr)
  = \log\frac{2}{1+m},
\end{equation}
which is strictly decreasing in $m\in[0,1]$.
\end{proof}

\begin{theorem}[Minimality condition for law-independent deterministic DPI rules]
\label{thm:minimality-general}
Fix a measurable map $f:\Omega\to \mathsf{Y}$ between measurable spaces
$(\Omega,\mathcal{F})$ and $(\mathsf{Y},\mathcal{F}_Y)$. Suppose that for each
similarity kernel $K$ on $\Omega$ we assign an output kernel
$\widehat K^{\mathsf{Y}}$ on $\mathsf{Y}$ (depending only on $(K,f)$, not on the choice of
probability measure on $\Omega$), and define the back-composed kernel
$\widehat K^f(\omega,\omega') := \widehat K^{\mathsf{Y}}(f(\omega),f(\omega'))$ on
$\Omega$. Assume that for every probability measure $\mu$ on $\Omega$ the data-processing inequality
\begin{equation}
  H_K(\mu) \;\ge\; H_{\widehat K^{\mathsf{Y}}}(f_\#\mu)
\end{equation}
holds.

Then, for every such $\mu$ and $\mu_Y:=f_\#\mu$,
\begin{equation}
  \widehat K^{\mathsf{Y}}(y,y')
  \;\ge\; \operatorname*{ess\,sup}_{(\omega,\omega')\sim \mu_{X\mid Y=y}\otimes\mu_{X\mid Y=y'}}
            K(\omega,\omega'),
  \quad\text{for }(\mu_Y\otimes\mu_Y)\text{-a.e.\ }(y,y'),
\end{equation}
where $\{\mu_{X\mid Y=y}\}$ is any disintegration of $\mu$ along $f$
(the right-hand side is well-defined and independent of the version).
The inequality is only of interest off the diagonal; when $y=y'$ it holds always since $\widehat K^{\mathsf{Y}}(y,y)=1$.

\end{theorem}

\begin{proof}
Fix $K$ and $f$. Since the claimed inequality is guaranteed on the diagonal, suppose for contradiction that it fails on a set of positive $(\mu_Y\otimes\mu_Y)$-measure contained in $\{(y,y'):y\neq y'\}$. Then there exist $y_0\neq y_0'\in\mathsf{Y}$ in that failure set such that
\begin{equation}
  \widehat K^{\mathsf{Y}}(y_0,y_0')
  < \operatorname*{ess\,sup}_{(\omega,\omega')\sim \mu_{X\mid Y=y_0}\otimes\mu_{X\mid Y=y_0'}}
       K(\omega,\omega').
\end{equation}
By definition of essential supremum,
\begin{equation}
  A:=\{(\omega,\omega')\in f^{-1}(y_0)\times f^{-1}(y_0'):\ K(\omega,\omega')>\widehat K^{\mathsf{Y}}(y_0,y_0')\}
\end{equation}
has positive $(\mu_{X\mid Y=y_0}\otimes\mu_{X\mid Y=y_0'})$-measure; pick
$(\omega_0,\omega_0')\in A$ and set
$\tilde\mu:=\tfrac12\delta_{\omega_0}+\tfrac12\delta_{\omega_0'}$.
Let $a:=\widehat K^{\mathsf{Y}}(y_0,y_0')$ and $m:=K(\omega_0,\omega_0')$, so
$0\le a<m\le1$. Since the assignment is law-independent, the output kernel
$\widehat K^{\mathsf{Y}}$ (hence the back-composed $\widehat K^f$ entry $a$ on
$\{\omega_0,\omega_0'\}$) is unchanged when $\mu$ is replaced by $\tilde\mu$. By Lemma~\ref{lem:two-point-monotone},
\begin{equation}
  H_{\widehat K^f}(\tilde\mu) > H_{K}(\tilde\mu).
\end{equation}
By Lemma~\ref{lem:pullback-identity-any-output-kernel},
\begin{equation}
  H_{\widehat K^{\mathsf{Y}}}(f_\#\tilde\mu)
  = H_{\widehat K^f}(\tilde\mu),
\end{equation}
so
\begin{equation}
  H_K(\tilde\mu)
  < H_{\widehat K^{\mathsf{Y}}}(f_\#\tilde\mu),
\end{equation}
contradicting the assumed DPI for all input laws. Therefore the theorem's lower
bound holds.
\end{proof}

Thus a law-independent deterministic rule must dominate the fixed-law posterior-induced kernel $K^{\mathsf{Y},\mu}$ for every input law.

\subsection{Finite specialization}

In the finite deterministic setting, Definition~\ref{def:posterior-induced-output-kernel} becomes a maximum over posterior supports: if $X\sim p$, $Y=f(X)$, and $q_y,q_{y'}>0$, then
\begin{equation}
  K^{\mathsf{Y},p}_{y,y'}
  =
  \max_{\substack{x\in f^{-1}(y),\,p_x>0\\ x'\in f^{-1}(y'),\,p_{x'}>0}}
  K^{\mathsf{X}}_{x,x'}.
\end{equation}
A law-independent version is obtained by taking the maximum over the full nonempty fibers. Let $\mathsf{X}$ and $\mathsf{Y}$ be finite sets and let $f:\mathsf{X}\to\mathsf{Y}$. For a similarity matrix $K^{\mathsf{X}}$ on $\mathsf{X}$, define the induced kernel on $\mathsf{Y}$ by
\begin{equation}
  K^{\mathsf{Y}}_{y,y'}
  :=
  \begin{cases}
  \displaystyle \max_{x\in f^{-1}(y),\,x'\in f^{-1}(y')} K^{\mathsf{X}}_{x,x'},
  & f^{-1}(y),f^{-1}(y')\neq\varnothing,\\[6pt]
  1, & y=y',\\[2pt]
  \text{any value in }[0,1]\text{, symmetrically}, & \text{otherwise},
  \end{cases}
  \label{eq:KY-max-def}
\end{equation}
The choices involving empty fibers do not affect $H_{K^{\mathsf{Y}}}(q)$ for any $q=f_\#p$.
Define the back-composed kernel on $\mathsf{X}$ by
\begin{equation}
  K^f_{x,x'} := K^{\mathsf{Y}}_{f(x),f(x')},
  \label{eq:Kf-def}
\end{equation}

If $X\sim p$ on $\mathsf{X}$ and $Y=f(X)$ with pmf $q=f_\#p$, then the max-over-fibers induced kernel \eqref{eq:KY-max-def} and its back-composition \eqref{eq:Kf-def} satisfy
\begin{equation}
\label{eq:finite-coarse-graining-summary}
\begin{aligned}
H_{K^{\mathsf{Y}}}(q)&=H_{K^f}(p),\\
K^f_{x,x'}&\ge K^{\mathsf{X}}_{x,x'}\quad \forall x,x',\\
H_{K^{\mathsf{X}}}(p)&\ge H_{K^{\mathsf{Y}}}(q).
\end{aligned}
\end{equation}
The domination $K^f\ge K^{\mathsf{X}}$ is immediate from \eqref{eq:KY-max-def}, and the entropy inequality follows since enlarging a kernel increases typicality and hence decreases $K$-entropy.
Moreover, Theorem~\ref{thm:minimality-general} implies that among law-independent rules $(K^{\mathsf{X}},f)\mapsto \widehat K^{\mathsf{Y}}$ that guarantee $H_{K^{\mathsf{X}}}(p)\ge H_{\widehat K^{\mathsf{Y}}}(f_\#p)$ for all pmfs $p$, the max-over-fibers kernel \eqref{eq:KY-max-def} is pointwise minimal.

\section{Randomized Transformations and Markov Kernels}
\label{sec:markov-kernels}

The posterior-induced output kernel was defined in Section~\ref{sec:posterior-induced-output-kernels} for an arbitrary joint law $(X,Y)$. For a Markov kernel $P(dy\mid x)$, we apply that construction to the channel-generated joint law
\begin{equation}
  \mu_{XY}(dx,dy)=\mu(dx)P(dy\mid x).
\end{equation}
Thus $K^{\mathsf{Y},\mu}$ is already fixed-law minimal among output kernels $L$ satisfying
\begin{equation}
  L(Y,Y')\ge K(X,X')
\end{equation}
for two independent draws $(X,Y),(X',Y')\sim\mu_{XY}$. The remaining point is to prove the data-processing inequality, which we do by realizing the channel as a deterministic map on an enlarged space.

\subsection{Channel-generated induced kernel and DPI}

Let $(\Omega,\mathcal{F},\mu,K)$ be our base kernelled probability space and let $(\mathsf{Y},\mathcal{F}_Y)$ be another measurable space.
Let
\begin{equation}
  (\omega,B)\mapsto P(B\mid \omega),
  \qquad B \in \mathcal{F}_Y,\ \omega \in \Omega,
\end{equation}
be a Markov kernel from $\Omega$ to $\mathsf{Y}$: for each $\omega$, the map
$B \mapsto P(B\mid \omega)$ is a probability measure on $(\mathsf{Y},\mathcal{F}_Y)$,
and for each $B \in \mathcal{F}_Y$, the map $\omega \mapsto P(B\mid \omega)$ is
$\mathcal{F}$-measurable.

If $X\sim\mu$ is an $\Omega$-valued random variable and $Y$ is a $\mathsf{Y}$-valued random
variable with conditional law $P(\cdot\mid X)$, then the joint law of $(X,Y)$ is
\begin{equation}
  \mu_{XY}(A\times B)
  := \int_A P(B\mid \omega)\, d\mu(\omega),
  \qquad A \in \mathcal{F},\ B \in \mathcal{F}_Y,
\end{equation}
and the marginal law of $Y$ is
\begin{equation}
  \mu_Y(B)
  := \mu_{XY}(\Omega\times B)
  = \int_\Omega P(B\mid \omega)\, d\mu(\omega),
  \qquad B\in\mathcal{F}_Y.
\end{equation}

\begin{remark}[Realizing Markov kernels as deterministic maps]
\label{rem:markov-realization}
When $\mathsf{Y}$ is standard, any Markov kernel $P(\cdot\mid \omega)$ from $\Omega$ to $\mathsf{Y}$
admits a measurable \emph{realization} $\Phi:\Omega\times[0,1]\to\mathsf{Y}$ such that if
$R\sim\mathrm{Unif}[0,1]$ is independent of $X\sim\mu$, then $\Phi(X,R)$ has conditional law
$P(\cdot\mid X)$ (randomization lemma/kernel representation; see \cite[Lem.~2.22, p.~34]{kallenberg_foundations}).
\end{remark}

Applying Definition~\ref{def:posterior-induced-output-kernel} to this joint law gives the posterior-induced output kernel $K^{\mathsf{Y},\mu}$ on $\mathsf{Y}$. When we wish to emphasize the dependence on the Markov kernel, we may write $K^{\mathsf{Y},\mu,P}$ or $K^{\mathsf{Y},\mu_{XY}}$, but when $P$ is fixed we suppress it and write $K^{\mathsf{Y},\mu}$.
The proof below reduces the channel case to the deterministic theorem by choosing a realization and ignoring the added randomization coordinate in the lifted kernel. The only technical point is that the deterministic induced kernel associated with $Y=\Phi(X,R)$ agrees with the channel-induced kernel $K^{\mathsf{Y},\mu}$; this is Lemma~\ref{lem:markov-realization-invariance}.

\begin{theorem}[Coarse-graining inequality for Markov kernels]
\label{thm:coarse-graining-markov}
Let $(\Omega,\mu,K)$ be a kernelled probability space and
$P(\cdot\mid \cdot)$ a Markov kernel from $\Omega$ to $\mathsf{Y}$,
with marginal $\mu_Y$ on $\mathsf{Y}$.
Let $K^{\mathsf{Y},\mu}$ be the posterior-induced output kernel on $\mathsf{Y}$ from
Definition~\ref{def:posterior-induced-output-kernel}. Then
\begin{equation}
  H_{K^{\mathsf{Y},\mu}}(\mu_Y) \;\le\; H_K(\mu).
\end{equation}
\end{theorem}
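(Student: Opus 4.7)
The plan is to reduce the randomized coarse--graining problem to the deterministic one by lifting the space and invoking the already--proved deterministic inequality from Theorem~\ref{thm:coarse-graining-general}. Concretely, I would first invoke Remark~\ref{rem:markov-realization} (standard Borel assumption on $\mathcal{Y}$) to fix a measurable realization $\Phi:\Omega\times[0,1]\to\mathcal{Y}$ of the Markov kernel $P$, and work on the lifted probability space $(\tilde\Omega,\tilde\mu,\tilde K)$ with $\tilde\Omega=\Omega\times[0,1]$, $\tilde\mu=\mu\otimes\lambda$, and $\tilde K((\omega,r),(\omega',r')):=K(\omega,\omega')$. Proposition~\ref{prop:entropy-lift} then gives the entropy--preservation identity $H_{\tilde K}(\tilde\mu)=H_K(\mu)$, so the left--hand side of the desired inequality is unchanged by lifting.

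Next I would view $f_\Phi(\omega,r):=\Phi(\omega,r)$ as a measurable map $\tilde\Omega\to\mathcal{Y}$ and check that $(f_\Phi)_\#\tilde\mu=\nu$; this is exactly the statement that $Y=\Phi(X,R)$ has marginal $\nu$, which holds by construction of the realization $\Phi$. The lifted space $\tilde\Omega$ is standard Borel as a product of standard Borel spaces, so the hypotheses of Theorem~\ref{thm:coarse-graining-general} are satisfied, and it yields
\[
H_{\tilde K}(\tilde\mu) \;\ge\; H_{K^{\mathcal{Y},\Phi}}(\nu),
\]
where $K^{\mathcal{Y},\Phi}$ is the law--induced kernel on $\mathcal{Y}$ obtained by applying Definition~\ref{def:fiberwise-max-kernel} to the deterministic map $f_\Phi$ on the lifted space.

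To conclude, I would then apply Proposition~\ref{prop:markov-realization-invariance}, which identifies $K^{\mathcal{Y},\Phi}$ with the canonical law--induced kernel $K^{\mathcal{Y},\mu}$ of Definition~\ref{def:markov-canonical-output-kernel} $\nu\otimes\nu$--a.e. Since $H_{K^{\mathcal{Y},\Phi}}(\nu)$ depends on $K^{\mathcal{Y},\Phi}$ only through its $\nu\otimes\nu$--a.e.\ equivalence class (the typicality $\tau^{\mathcal{Y}}$ is defined via integration against $\nu$), this gives $H_{K^{\mathcal{Y},\Phi}}(\nu)=H_{K^{\mathcal{Y},\mu}}(\nu)$, and chaining the equalities and inequality above yields the theorem.

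The only delicate point, and the one I would flag as the main obstacle, is the identification step: one must be careful that the disintegration of $\tilde\pi$ along $f_\Phi$ and the disintegration of $\pi$ along $Y$ produce compatible conditional laws, so that the fiberwise essential supremum on the lifted space really does reproduce the one defined directly from $\pi$. This is precisely the content of Proposition~\ref{prop:markov-realization-invariance} (whose proof checks that the $\Omega$--marginal of $\tilde\mu_y$ is $\mu_y$), so the present theorem reduces to citing that proposition together with the already--established deterministic inequality; no new calculation is required beyond verifying that each invocation applies.
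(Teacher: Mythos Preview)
Your proposal is correct and follows essentially the same route as the paper's proof: lift via a realization $\Phi$, use Proposition~\ref{prop:entropy-lift} for $H_{\tilde K}(\tilde\mu)=H_K(\mu)$, apply the deterministic coarse--graining inequality (Theorem~\ref{thm:coarse-graining-general}) to $f_\Phi$, and then invoke Proposition~\ref{prop:markov-realization-invariance} to identify $K^{\mathcal{Y},\Phi}$ with $K^{\mathcal{Y},\mu}$ up to $\nu\otimes\nu$--null sets. Your additional remarks (that $(f_\Phi)_\#\tilde\mu=\nu$, that $\tilde\Omega$ is standard Borel, and that entropy depends only on the $\nu\otimes\nu$--a.e.\ class of the kernel) are exactly the verifications one needs and match the paper's argument.
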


\begin{proof}
Fix any realization $\Phi:\Omega\times[0,1]\to \mathsf{Y}$ of the Markov kernel and form the lifted space
$\tilde\Omega:=\Omega\times[0,1]$ with $\tilde\mu:=\mu\otimes\lambda$ and
$\tilde K((\omega,r),(\omega',r')):=K(\omega,\omega')$.
Let $\pi:\tilde\Omega\to\Omega$ be the projection $\pi(\omega,r):=\omega$. Then
$\tilde K=K^\pi$, so Lemma~\ref{lem:pullback-identity-any-output-kernel} gives
$H_{\tilde K}(\tilde\mu)=H_K(\mu)$.

Apply Theorem~\ref{thm:coarse-graining-general} to $(\tilde\Omega,\tilde\mu,\tilde K)$
and the deterministic map $f_\Phi(\omega,r):=\Phi(\omega,r)$.
This yields an induced kernel $K^{\mathsf{Y},\Phi}$ on $\mathsf{Y}$ such that
\begin{equation}
  H_{K^{\mathsf{Y},\Phi}}(\mu_Y)\le H_{\tilde K}(\tilde\mu)=H_K(\mu).
\end{equation}
By Lemma~\ref{lem:markov-realization-invariance},
$K^{\mathsf{Y},\Phi}=K^{\mathsf{Y},\mu}$, hence
$H_{K^{\mathsf{Y},\Phi}}(\mu_Y)=H_{K^{\mathsf{Y},\mu}}(\mu_Y)$.
Therefore $H_{K^{\mathsf{Y},\mu}}(\mu_Y)\le H_K(\mu)$.
\end{proof}

The same construction yields a simple nuisance-invariance consequence.

\begin{corollary}[A sufficient condition for nuisance-noise invariance]
\label{cor:cond-indep-noise}
Let $(X,Y)$ be jointly distributed, where $X$ takes values in the task space $\Omega$ equipped with similarity kernel $K$ and
$Y=(Y_0,N)$ takes values in $\mathsf{Y}=\mathsf{Y}_0\times\mathsf{N}$.
Assume that $X$ and $N$ are conditionally independent given $Y_0$, meaning that there exists a regular conditional law
$\{\mu_{X\mid Y_0=y_0}\}$ such that
\begin{equation}
  \mu_{X\mid (Y_0,N)=(y_0,n)} \;=\; \mu_{X\mid Y_0=y_0}.
  \label{eq:cond-indep-posterior}
\end{equation}
for $\mu_{Y_0}$-a.e.\ $y_0$ and $\mu_{N\mid Y_0=y_0}$-a.e.\ $n$, and further that
\begin{equation}
  \operatorname*{ess\,sup}_{(\omega,\omega')\sim \mu_{X\mid Y_0=y_0}^{\otimes 2}} K(\omega,\omega') = 1
  \quad\text{for }\mu_{Y_0}\text{-a.e.\ }y_0.
\end{equation}
Then for $(\mu_Y\otimes\mu_Y)$-a.e.\ $((y_0,n),(y_0',n'))$,
\begin{equation}
K^{\mathsf{Y},\mu}\big((y_0,n),(y_0',n')\big)
=
K^{\mathsf{Y}_0,\mu}\big(y_0,y_0'\big),
\end{equation}
where $K^{\mathsf{Y},\mu}$ and $K^{\mathsf{Y}_0,\mu}$ are the posterior-induced output kernels induced from the same task kernel $K$
via $(X,Y)$ and $(X,Y_0)$ respectively (Definition~\ref{def:posterior-induced-output-kernel}).
Consequently,
\begin{equation}
H_{K^{\mathsf{Y},\mu}}(\mu_Y)=H_{K^{\mathsf{Y}_0,\mu}}(\mu_{Y_0}).
\end{equation}
\end{corollary}

\begin{proof}
Write $\pi:\mathsf{Y}\to\mathsf{Y}_0$ for the projection $\pi(y_0,n):=y_0$.
By \eqref{eq:cond-indep-posterior}, for $\mu_Y$-a.e.\ $y=(y_0,n)$ we have
$\mu_{X\mid Y=y}=\mu_{X\mid Y_0=\pi(y)}$.
Therefore, whenever $\pi(y)\neq \pi(y')$, Definition~\ref{def:posterior-induced-output-kernel} immediately gives
\[
  K^{\mathsf{Y},\mu}(y,y')
  =
  K^{\mathsf{Y}_0,\mu}(\pi(y),\pi(y'))
\]
for $(\mu_Y\otimes\mu_Y)$-a.e.\ such pair $(y,y')$.

On the diagonal of $\mathsf{Y}_0$, if $y=y'$ then both sides are $1$ by convention, while if $y\neq y'$ but $\pi(y)=\pi(y')=y_0$, the additional hypothesis gives
\[
  K^{\mathsf{Y},\mu}(y,y')
  =
  \operatorname*{ess\,sup}_{(\omega,\omega')\sim \mu_{X\mid Y_0=y_0}^{\otimes 2}} K(\omega,\omega')
  =
  1
  =
  K^{\mathsf{Y}_0,\mu}(y_0,y_0).
\]
Hence $K^{\mathsf{Y},\mu}=(K^{\mathsf{Y}_0,\mu})^\pi$ $(\mu_Y\otimes\mu_Y)$-a.e., and the entropy identity follows from Lemma~\ref{lem:pullback-identity-any-output-kernel}.
\end{proof}

\begin{remark}[Why the extra hypothesis is needed]
\label{rem:cond-indep-noise-atoms}
The extra hypothesis is only needed for pairs of distinct outputs
\((y_0,n)\neq (y_0,n')\) with the same value of \(y_0\). These pairs are
off-diagonal in \(\mathsf{Y}_0\times\mathsf{N}\), but they project to the diagonal
of \(\mathsf{Y}_0\), where the induced kernel is set equal to \(1\). The condition
ensures that the induced kernel on \(\mathsf{Y}_0\times\mathsf{N}\) also assigns
similarity \(1\) to such pairs. It can fail, for example, for
\(K(x,x')=\mathbf 1\{x=x'\}\) when the posterior law of \(X\mid Y_0=y_0\) is
non-atomic.
\end{remark}

\section{Conditional Similarity-Sensitive Entropy and Information Gain}
\label{sec:conditional-mutual}

The previous sections studied representation change under deterministic maps and Markov kernels. We now turn to inference, keeping the similarity kernel on the task variable $X$ fixed and defining conditional $K$-entropy by averaging the $K$-entropy of posterior laws $\mu_{X\mid Y=y}$. This leads to the expected $K$-information gain
\begin{equation}
  I_K(X;Y):=H_K(X)-H_K(X\mid Y),
\end{equation}
which is intended to quantify inference: observing $Y$ should (on average) decrease $K$-uncertainty about $X$.
We first note the partition-kernel regime that exactly recovers Shannon conditional quantities, then give a one-dimensional Laplace pullback regime where concavity holds, and finally discuss some aspects of the concavity boundary, using existing low-dimensional examples and with our SPD+MTI counterexample.

\subsection{General conditional \texorpdfstring{$K$}{K}-entropy}

Let $(\Omega_X,\mathcal{F}_X,\mu_X,K)$ be a kernelled probability space,
and let $Y$ take values in a measurable space
$(\mathsf{Y},\mathcal{F}_Y)$. Let $\mu_{XY}$ denote the joint law of $(X,Y)$, with marginals $\mu_X$ and $\mu_Y$. Let $\{\mu_{X\mid Y=y}\}_{y\in\mathsf{Y}}$ be a regular conditional law of $X$ given $Y$.

\begin{definition}[Conditional $K$-entropy of $X$ given $Y$]
For $\mu_Y$-a.e.\ $y$, define the conditional typicality function
\begin{equation}
  \tau_y(\omega)
  := \int_{\Omega_X} K(\omega,\omega')\, d\mu_{X\mid Y=y}(\omega').
\end{equation}
The \emph{pointwise conditional $K$-entropy} is
\begin{equation}
  H_K(X\mid Y=y)
  := \int_{\Omega_X} \bigl(-\log \tau_y(\omega)\bigr)\, d\mu_{X\mid Y=y}(\omega)
  \;=\; H_K(\mu_{X\mid Y=y}),
\end{equation}
as an extended-real value. The \emph{(averaged) conditional $K$-entropy} is
\begin{equation}
  H_K(X\mid Y)
  := \int_{\mathsf{Y}} H_K(X\mid Y=y)\, d\mu_Y(y),
\end{equation}
again interpreted in $[0,\infty]$.
\end{definition}

\begin{definition}[$K$-information gain about $X$]
\label{def:mutual-info-K}
When $H_K(X)$ and $H_K(X\mid Y)$ are finite, we define the (expected) $K$-information gain
about $X$ from observing $Y$ by
\begin{equation}
  I_K(X;Y) := H_K(X) - H_K(X\mid Y).
\end{equation}
\end{definition}

A sufficient condition for $I_K(X;Y)\ge 0$ is concavity of $\mu\mapsto H_K(\mu)$: since $\mu_X$ is the $\mu_Y$-mixture of posteriors $\mu_{X\mid Y=y}$, Jensen's inequality gives
$H_K(X\mid Y)\le H_K(X)$.
We call this \emph{inference monotonicity} (conditioning cannot increase expected $K$-entropy).
Conversely, if $\mu\mapsto H_K(\mu)$ is not concave, a two-point mixture construction (realized by a binary $Y$) yields a joint law for which $H_K(X\mid Y)>H_K(X)$, so $I_K(X;Y)$ can be negative.

\subsection{Partition kernels as the exact Shannon case}

\begin{definition}[Finite-class partition kernel]
\label{def:finite-partition-kernel}
Let $(\Omega_X,\mathcal{F}_X,\mu_X)$ be a probability space.
A kernel $K$ on $\Omega_X$ is called a \emph{finite-class partition kernel}
if there exist an integer $m\ge 1$ and a measurable map
\begin{equation}
  \pi:\Omega_X\to\{1,\dots,m\}
\end{equation}
such that
\begin{equation}
  K(\omega,\omega')=\mathbf{1}\{\pi(\omega)=\pi(\omega')\}.
\end{equation}
Equivalently, writing $C_j:=\pi^{-1}(j)$, the sets $\{C_1,\dots,C_m\}$ form a
finite measurable partition of $\Omega_X$ and $K$ is $1$ on each
$C_j\times C_j$ and $0$ off the block diagonal. If $X$ is an
$\Omega_X$-valued random variable, the associated coarse variable is
$Z:=\pi(X)$.
\end{definition}

\begin{proposition}[Conditional entropy for partition kernels]
\label{prop:conditional-partition-kernel}
Assume $K$ is a finite-class partition kernel on $\Omega_X$, with associated map $\pi$
and coarse variable $Z=\pi(X)$. For any joint law of $(X,Y)$,
\begin{equation}
  H_K(X) = H(Z),
  \qquad
  H_K(X\mid Y) = H(Z\mid Y),
\end{equation}
where $H(Z\mid Y)$ is the usual Shannon conditional entropy. Consequently,
\begin{equation}
  I_K(X;Y)=I(Z;Y)\ge 0.
\end{equation}
\end{proposition}

\begin{proof}
Writing $I_m$ for the identity kernel on $\{1,\dots,m\}$, we have
$K=I_m^\pi$. Applying Lemma~\ref{lem:pullback-identity-any-output-kernel}
to $\pi:\Omega_X\to\{1,\dots,m\}$ and the law of $X$ gives
\begin{equation}
  H_K(X)=H_{I_m}(Z)=H(Z).
\end{equation}
For $\mu_Y$-a.e.\ $y$, the same lemma applied to the posterior law
$\mu_{X\mid Y=y}$ gives
\begin{equation}
  H_K(X\mid Y=y)=H_{I_m}(\pi_\#\mu_{X\mid Y=y})=H(Z\mid Y=y).
\end{equation}
Averaging over $y$ yields $H_K(X\mid Y)=H(Z\mid Y)$, and hence
$I_K(X;Y)=I(Z;Y)\ge 0$.
\end{proof}

\subsection{Concavity for one-dimensional Laplace pullback kernels}
\label{subsec:conditional-laplace-concavity}

On $\mathbb{R}$, define the Laplace similarity kernel
\begin{equation}
  K_{\mathrm{Lap}}(s,t):=\exp(-|s-t|).
\end{equation}
More generally, given a measurable map $h:\Omega_X\to\mathbb{R}$, define the \emph{one-dimensional Laplace pullback kernel}
\begin{equation}
  K_{\mathrm{Lap}}^h(\omega,\omega')
  := K_{\mathrm{Lap}}(h(\omega),h(\omega'))
  = \exp(-|h(\omega)-h(\omega')|).
\end{equation}

\begin{theorem}[Concavity for the Laplace similarity kernel]
\label{thm:laplace-concavity}
Let $K_{\mathrm{Lap}}(s,t)=\exp(-|s-t|)$ on $\mathbb{R}$.
Fix a compact interval $I=[a,b]\subset\mathbb{R}$.
Then the functional $\mu\mapsto H_{K_{\mathrm{Lap}}}(\mu)$ is concave on the set of Borel probability measures $\mu$ supported on $I$.
Equivalently, for all such $\mu_0,\mu_1$ and all $\lambda\in[0,1]$,
\begin{equation}
  H_{K_{\mathrm{Lap}}}\bigl(\lambda \mu_0+(1-\lambda)\mu_1\bigr)
  \;\ge\;
  \lambda H_{K_{\mathrm{Lap}}}(\mu_0) + (1-\lambda) H_{K_{\mathrm{Lap}}}(\mu_1).
\end{equation}
\end{theorem}

\begin{proof}[Proof idea]
We indicate the main steps. Full details are in Appendix~\ref{sec:appendix-laplace-concavity-proof}.
First treat atomic measures supported on a finite ordered grid in $[a,b]$. For such measures the problem becomes concavity of the discrete functional
\begin{equation}
H_{K^{(n)}}(p)=-\sum_i p_i\log\bigl((K^{(n)}p)_i\bigr)
\end{equation}
on the simplex. On an ordered grid the inverse Laplace matrix is tridiagonal with the sign pattern of an $M$-matrix, which forces the Hessian to be negative semidefinite.

Then approximate an arbitrary Borel probability measure $\mu$ on $[a,b]$ by finer and finer atomic measures on such grids. Because $\exp(-|s-t|)$ is uniformly continuous and bounded away from $0$ on $[a,b]^2$, the corresponding typicality functions converge uniformly, so the discrete concavity inequality passes to the limit.
\end{proof}

\begin{corollary}[Unbounded support: concavity on $\mathbb{R}$ under a first-moment condition]
\label{cor:laplace-concavity-P1}
Let $K_{\mathrm{Lap}}(s,t)=\exp(-|s-t|)$ on $\mathbb{R}$.
Then the functional $\mu\mapsto H_{K_{\mathrm{Lap}}}(\mu)$ is concave on the class
\begin{equation}
  \mathcal P_1(\mathbb{R})
  :=\Bigl\{\mu:\ \mu\ \text{Borel probability on }\mathbb{R},\ \int_{\mathbb{R}} |s|\,d\mu(s)<\infty\Bigr\}.
\end{equation}
Moreover, $H_{K_{\mathrm{Lap}}}(\mu)<\infty$ for every $\mu\in\mathcal P_1(\mathbb{R})$.
\end{corollary}

\begin{proof}[Proof idea]
We indicate the main steps. Full details are in Appendix~\ref{sec:appendix-laplace-concavity-proof}, Subsection~\ref{app:laplace-concavity-P1-proof}.
Truncate by $\pi_R(s):=\max\{-R,\min\{s,R\}\}$ and $\mu^{(R)}:=(\pi_R)_\#\mu$, so $\mu^{(R)}$ is supported on $[-R,R]$ and Theorem~\ref{thm:laplace-concavity} gives concavity for $\mu\mapsto H_{K_{\mathrm{Lap}}}(\mu^{(R)})$.
To pass to $R\to\infty$, use the first-moment assumption to obtain both finiteness of
$H_{K_{\mathrm{Lap}}}(\mu)$ on $\mathcal P_1(\mathbb{R})$ and an integrable domination for the truncated entropies.
Dominated convergence then gives $H_{K_{\mathrm{Lap}}}(\mu^{(R)})\to H_{K_{\mathrm{Lap}}}(\mu)$, so the concavity inequality for the truncations passes to the limit.
\end{proof}

\begin{corollary}[Laplace pullback kernels: conditioning cannot increase expected $K$-entropy]
\label{cor:laplace-pullback-inference}
Let $h:\Omega_X\to\mathbb{R}$ be measurable and equip $\Omega_X$ with the pullback kernel $K_{\mathrm{Lap}}^h$.
Assume $\int_{\Omega_X} |h|\,d\mu<\infty$ for the laws $\mu$ under consideration (equivalently, $h_\#\mu\in\mathcal P_1(\mathbb{R})$).
Then $H_{K_{\mathrm{Lap}}^h}(\mu)=H_{K_{\mathrm{Lap}}}(h_\#\mu)$ for every such $\mu$ (Lemma~\ref{lem:pullback-identity-any-output-kernel}), and Corollary~\ref{cor:laplace-concavity-P1} implies that $\mu\mapsto H_{K_{\mathrm{Lap}}^h}(\mu)$ is concave on this class.
Consequently, for any jointly distributed $(X,Y)$ with $\mathbb{E}|h(X)|<\infty$,
\begin{equation}
  H_{K_{\mathrm{Lap}}^h}(X\mid Y)\le H_{K_{\mathrm{Lap}}^h}(X).
\end{equation}
When the information gain is defined, this gives $I_{K_{\mathrm{Lap}}^h}(X;Y)\ge 0$.
\end{corollary}

The map $h$ sends $\Omega_X$ into a one-dimensional feature space, and $K_{\mathrm{Lap}}^h$ applies Laplace similarity in that coordinate. Thus the pullback family inherits the same local, ordered geometry as $K_{\mathrm{Lap}}$. Subsection~\ref{subsec:concavity-boundaries} contrasts this with more nonlocal similarity patterns, for which concavity need not hold.

\begin{corollary}[Mutual-information data-processing inequality under concavity]
\label{cor:mi-dpi-concave}
Let $(\Omega_X,\mathcal{F}_X,\mu_X,K)$ be a kernelled probability space such that $\mu\mapsto H_K(\mu)$ is concave on the relevant class of laws.
If $X\to Y\to Z$ is a Markov chain (i.e.\ $X\perp\!\!\!\perp Z\mid Y$), then
\begin{equation}
  I_K(X;Y)\ge I_K(X;Z).
\end{equation}
In particular, this holds for one-dimensional Laplace pullback kernels under the integrability condition of Corollary~\ref{cor:laplace-pullback-inference}.
\end{corollary}

\begin{proof}
Since $X\perp\!\!\!\perp Z\mid Y$, the posterior $\mu_{X\mid Z=z}$ is the $\mu_{Y\mid Z=z}$-mixture $\int \mu_{X\mid Y=y}\,d\mu_{Y\mid Z=z}(y)$.
By concavity and Jensen's inequality,
\begin{equation}
  H_K(X\mid Z=z)\ge \int H_K(X\mid Y=y)\,d\mu_{Y\mid Z=z}(y).
\end{equation}
Averaging over $z\sim\mu_Z$ gives $H_K(X\mid Z)\ge H_K(X\mid Y)$, hence $I_K(X;Y)\ge I_K(X;Z)$.
\end{proof}

\subsection{Concavity boundaries}
\label{subsec:concavity-boundaries}

Corollary~\ref{cor:laplace-pullback-inference} gives a useful class where the inference monotonicity holds.
For general (``fuzzy'') kernels $K^{\mathsf{X}}$, however, the Shannon-style inequality
\begin{equation}
  H_{K^{\mathsf{X}}}(X\mid Y) \le H_{K^{\mathsf{X}}}(X)
\end{equation}
need not hold.
Such nonconcavity can already occur in dimension $3$; see \cite[Thm.~9]{bavaud_effective_entropy}.

In contrast, the binary case is known to be concave \cite[Thm.~8]{gallego_gait_supp}, hence $H_{K^{\mathsf{X}}}(X\mid Y)\le H_{K^{\mathsf{X}}}(X)$ for all joint laws.
Another reasonable candidate regime is where $K$ is symmetric positive definite and satisfies the multiplicative triangle inequality $K(x,z)\ge K(x,y)K(y,z)$.
GAIT conjectures concavity in this setting~\cite[Conj.~1]{gallego_gait}, but SPD+MTI is not sufficient in general: Appendix~\ref{sec:appendix-18x18-spd-mti-counterexample} gives an $18$-state example, disproving their Conjecture~1.

\section{Representation and Discrete Approximation}
\label{sec:representation}
	
We now return to the discrete/continuous interface. We show that continuous similarity-sensitive entropy can be understood as a limit of discrete similarity-matrix approximations.

\subsection{Step-kernel approximations and discrete entropies}
\label{sec:discrete-approximations}

For each $n\in\mathbb{N}$, partition $[0,1]$ into intervals
$I_i^{(n)} := [(i-1)/n, i/n)$, $i=1,\dots,n$, and let $\phi_n:[0,1]\to\{1,\dots,n\}$ be given by
$\phi_n(u)=i$ on $I_i^{(n)}$. Thus $(\phi_n)_\#\lambda=p^{(n)}$, where $p^{(n)}$ is the uniform pmf on $\{1,\dots,n\}$.
Define the block-average kernel
\begin{equation}
  K_n(u,u')
  := n^2 \int_{I_i^{(n)}\times I_j^{(n)}} K(s,t)\, ds\,dt
\quad\text{for } u\in I_i^{(n)},\ u'\in I_j^{(n)}.
\end{equation}
Reset its diagonal to $1$, which does not change $H_{K_n}(\lambda)$.
Let $\widetilde K^{(n)},K^{(n)}\in[0,1]^{n\times n}$ be the block-average and diagonal-repaired matrices
\begin{equation}
  \widetilde K^{(n)}_{ij}
  := n^2 \int_{I_i^{(n)}\times I_j^{(n)}} K(s,t)\, ds\,dt,
\end{equation}
\begin{equation}
  K^{(n)}_{ij}
  :=
  \begin{cases}
    \widetilde K^{(n)}_{ij}, & i\neq j,\\
    1, & i=j.
  \end{cases}
\end{equation}
Unlike the continuous diagonal case, this discrete diagonal repair changes the typicality vector and must therefore be controlled separately; under a uniform lower bound on typicality, Lemma~\ref{lem:diagonal-repair-uniform} shows that the resulting entropy error is $O(1/n)$.
Then
\begin{equation}
  K_n(u,u') = \widetilde K^{(n)}_{\phi_n(u),\phi_n(u')}
  \quad\text{for $\lambda\otimes\lambda$-a.e.\ $(u,u')$},
\end{equation} and therefore
$H_{K_n}(\lambda)=H_{\widetilde K^{(n)}}(p^{(n)})$.

Let $\tau(u) = \int_0^1 K(u,u')\,du'$ be the typicality function of $K$,
and let $\tau_n(u) = \int_0^1 K_n(u,u')\,du'$ be the typicality function of
$K_n$. For $u\in I_i^{(n)}$,
\begin{equation}
  \tau_n(u)
  = n\int_{I_i^{(n)}} \tau(s)\,d\lambda(s)
  = \mathbb{E}\bigl[\tau \mid \mathcal{F}_n\bigr](u),
  \label{eq:tau-n-step-kernel}
\end{equation}
where $\mathcal{F}_n$ is the $\sigma$-algebra generated by the partition intervals
$\{I_i^{(n)}\}_{i=1}^n$.
Lemma~\ref{lem:step-kernel-typicality} in Appendix~\ref{sec:appendix-proof-discrete-to-continuous} gives \eqref{eq:tau-n-step-kernel}.

\begin{theorem}[Discrete approximations to $H_K$]
\label{thm:discrete-to-continuous}
Let $K$ be a kernel on $([0,1],\lambda)$ with typicality function
$\tau(u)=\int_0^1 K(u,u')\, du'$.
Let $\widetilde K^{(n)}$ and $p^{(n)}$ be as above. Then
\begin{equation}
  H_{\widetilde K^{(n)}}(p^{(n)}) \;\to\; H_K(\lambda)
  \quad\text{as } n\to\infty,
\end{equation}
where the limit holds in $\mathbb{R} \cup \{+\infty\}$.

If in addition $\tau(u)\ge \varepsilon$ for almost every $u$ for some
$\varepsilon>0$, then the same convergence holds with the diagonal-repaired
similarity matrices $K^{(n)}$, i.e.
\begin{equation}
  H_{K^{(n)}}(p^{(n)}) \;\to\; H_K(\lambda),
\end{equation}
and moreover
$0\le H_{\widetilde K^{(n)}}(p^{(n)})-H_{K^{(n)}}(p^{(n)})\le 1/(\varepsilon n)$.
\end{theorem}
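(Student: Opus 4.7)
The plan is to reduce the claim to a pointwise convergence $\tau_n\to\tau$ and then sandwich the entropy integrals from above and below. The key observation is that the formula for $\tau_n$ derived in the preceding lemma exhibits $\tau_n$ as a conditional expectation: writing $\mathcal{F}_n$ for the $\sigma$-algebra on $[0,1]$ generated by the partition $\{I_i^{(n)}\}$, we have $\tau_n=\mathbb{E}[\tau\mid\mathcal{F}_n]$. By the Lebesgue differentiation theorem applied to $\tau\in L^1([0,1])$ (with the intervals $I_i^{(n)}\ni u$ of length $1/n$ shrinking regularly to $u$), $\tau_n(u)\to\tau(u)$ for $\lambda$--a.e.\ $u$; since $\tau>0$ a.e., this yields $-\log\tau_n(u)\to-\log\tau(u)$ a.e.

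For the first convergence, I would sandwich $H_{K_n}(\lambda)=\int_0^1(-\log\tau_n)\,d\lambda$ as follows. The upper bound is conditional Jensen applied to the convex function $-\log$:
\[
-\log\tau_n=-\log\mathbb{E}[\tau\mid\mathcal{F}_n]\;\le\;\mathbb{E}[-\log\tau\mid\mathcal{F}_n],
\]
and integrating gives $H_{K_n}(\lambda)\le H_K(\lambda)$ for every $n$. The lower bound is Fatou's lemma: since $\tau_n\in[0,1]$ we have $-\log\tau_n\ge 0$, so
\[
H_K(\lambda)=\int_0^1(-\log\tau)\,d\lambda\;\le\;\liminf_{n\to\infty}H_{K_n}(\lambda).
\]
Combined with the identity $H_{K_n}(\lambda)=H_{\widetilde K^{(n)}}(p^{(n)})$ already recorded in the text, this forces $H_{\widetilde K^{(n)}}(p^{(n)})\to H_K(\lambda)$ in $\mathbb{R}\cup\{+\infty\}$; when $H_K(\lambda)=+\infty$ the Jensen bound is vacuous and Fatou alone forces the limit to be $+\infty$.

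For the second statement, under the uniform positivity assumption $\tau\ge\varepsilon$ a.e., the identity $(\widetilde K^{(n)}p^{(n)})_i=\tau_n(u)$ for $u\in I_i^{(n)}$ shows that the typicality coordinates of the uniform pmf under $\widetilde K^{(n)}$ are bounded below by $\varepsilon$ uniformly in $n$ and $i$. Since $\widetilde K^{(n)}$ and $K^{(n)}$ agree off-diagonal with $K^{(n)}_{ii}=1\ge\widetilde K^{(n)}_{ii}$, Lemma~\ref{lem:diagonal-repair-uniform} applied with $A=\widetilde K^{(n)}$ and $A'=K^{(n)}$ yields the explicit bound $0\le H_{\widetilde K^{(n)}}(p^{(n)})-H_{K^{(n)}}(p^{(n)})\le 1/(\varepsilon n)$, and combining with the first part gives $H_{K^{(n)}}(p^{(n)})\to H_K(\lambda)$.

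The main technical subtlety is that the partitions $\{I_i^{(n)}\}$ are not nested in $n$, so the $\mathcal{F}_n$ do not form a monotone filtration and martingale convergence is not directly available. One could pass to the subsequence $n=2^k$ to recover a filtration and then interpolate, but it is cleaner to invoke the Lebesgue differentiation theorem, which applies to the full sequence because each interval $I_i^{(n)}\ni u$ shrinks regularly (with regularity constant $\ge 1/2$) to $\{u\}$. The case $H_K(\lambda)=+\infty$ requires no separate treatment: the Jensen upper bound becomes vacuous while Fatou automatically forces divergence of $H_{K_n}(\lambda)$.
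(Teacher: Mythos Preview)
Your proof is correct and follows the same overall skeleton as the paper (identify $\tau_n=\mathbb{E}[\tau\mid\mathcal F_n]$, get pointwise convergence $\tau_n\to\tau$, use conditional Jensen for an upper bound, and handle the diagonal repair via Lemma~\ref{lem:diagonal-repair-uniform}), but differs in two substantive ways. First, for the a.e.\ convergence $\tau_n\to\tau$ the paper invokes the martingale convergence theorem, whereas you use the Lebesgue differentiation theorem; your observation that the $\sigma$-algebras $\mathcal F_n$ for $n=1,2,3,\dots$ are \emph{not} nested is correct, so the martingale argument as stated in the paper is not literally available without passing to a dyadic subsequence, and your route via regularly shrinking intervals is the cleaner fix. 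Second, for passing to the limit in the integrals, the paper splits into the cases $H_K(\lambda)<\infty$ and $H_K(\lambda)=\infty$, using uniform integrability of the conditional expectations $\mathbb{E}[-\log\tau\mid\mathcal F_n]$ in the finite case and Fatou in the infinite case; you instead combine the Jensen upper bound $H_{K_n}(\lambda)\le H_K(\lambda)$ with Fatou's lower bound $H_K(\lambda)\le\liminf_n H_{K_n}(\lambda)$ (valid since $-\log\tau_n\ge 0$) to get a single sandwich that handles both cases at once. This is more elementary and avoids the uniform-integrability step entirely; the paper's approach, on the other hand, actually yields the slightly stronger conclusion $-\log\tau_n\to-\log\tau$ in $L^1$ when $H_K(\lambda)<\infty$.
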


\begin{proof}[Proof idea]
We indicate the main steps. Full details are in Appendix~\ref{sec:appendix-proof-discrete-to-continuous}.
The step kernel $K_n$ is constant on partition blocks and corresponds exactly to the finite matrix $\widetilde K^{(n)}$, so Lemma~\ref{lem:pullback-identity-any-output-kernel} identifies
\begin{equation}
  H_{\widetilde K^{(n)}}(p^{(n)})=H_{K_n}(\lambda).
\end{equation}
Its typicality function is the conditional expectation $\tau_n=\mathbb{E}[\tau\mid\mathcal{F}_n]$, so as the partition is refined we have $\tau_n\to\tau$ almost everywhere.
Jensen's inequality then gives $H_{K_n}(\lambda)\le H_K(\lambda)$, and the convergence follows by combining the a.e.\ limit with uniform integrability in the finite-entropy case and Fatou's lemma in the infinite-entropy case.
The diagonal-repair estimate is Lemma~\ref{lem:diagonal-repair-uniform}.
\end{proof}

Theorem~\ref{thm:uniform-representation} and Theorem~\ref{thm:discrete-to-continuous} show that any kernelled probability space $(\Omega,\mu,K)$ with $\tau(\omega)\ge \varepsilon>0$ for $\mu$-a.e.\ $\omega$ can be represented on $([0,1],\lambda)$ so that $H_K(\mu)$ is the limit of entropies of finite uniform distributions with similarity matrices (and in the atomless case this representation is an isomorphism).
Appendix~\ref{sec:structural-properties} records a related structural use of the typicality distribution: it is an isomorphism invariant and gives an obstruction to representing a fuzzy kernel as a finite-class partition kernel.

\section{Discussion and Future Directions}

\subsection{Interpretation and Related Work}
\label{sec:discussion}

\paragraph*{Representation change and transported kernels.}
The deterministic and channel results show why kernel transport is needed to compare entropy across representations. The induced kernel $K^{\mathsf{Y},\mu}$ assigns the smallest output-level similarity that still dominates the input similarities compatible with the two outputs. Theorems~\ref{thm:coarse-graining-general} and~\ref{thm:coarse-graining-markov} then give
\begin{equation}
  H_K(\mu)\ge H_{K^{\mathsf{Y},\mu}}(\mu_Y),
\end{equation}
so the entropy reduction corresponds to distinguishability loss under the new representation.

From the perspective of the output space, the deterministic and channel cases use the same object: for each output value $y$, what matters is the posterior law of $X$ given $Y=y$. The induced kernel compares two outputs by taking the essential supremum of $K$ under the corresponding pair of posteriors. Proposition~\ref{prop:posterior-induced-fixed-law-minimality} is their common fixed-law statement: for any joint law of $(X,Y)$, $K^{\mathsf{Y},\mu}$ is the smallest output kernel $L$ satisfying $L(Y,Y')\ge K(X,X')$ for independent draws.

The deterministic law-independent result is separate. For maps, posteriors are on fibers $f^{-1}(y)$, so a rule that must work for every prior is forced to dominate the maximum over that set; this is the content of Theorem~\ref{thm:minimality-general}. For Markov kernels, the posteriors depend on the prior law $\mu$ as well as the channel. We therefore prove fixed-law minimality and the channel data-processing inequality, but not a law-independent minimality theorem for Markov kernels.

\paragraph*{Nuisance variables and conditional independence.}
Corollary~\ref{cor:cond-indep-noise} records a simple Markov-kernel consequence of the posterior-induced output-kernel definition. If an observation $Y=(Y_0,N)$ and the nuisance coordinate $N$ carries no extra information about $X$ beyond $Y_0$, then under a mild sufficient condition the induced output kernel factors through $Y_0$, and
\begin{equation}
  H_{K^{\mathsf{Y},\mu}}(Y)=H_{K^{\mathsf{Y}_0,\mu}}(Y_0).
\end{equation}
This is worth contrasting with the Shannon regime. On the inference side, Shannon mutual information already ignores such nuisance coordinates:
\begin{equation}
  I(X;Y_0,N)=I(X;Y_0).
\end{equation}
But plain Shannon output entropy does not:
\begin{equation}
  H(Y_0,N)=H(Y_0)+H(N\mid Y_0),
\end{equation}
so appending irrelevant noise can raise the entropy of the representation even when it adds no information about the task.

\paragraph*{Differential entropy and changing coordinates.}
Differential entropy can also be phrased in kernel terms. Let $K_\epsilon$ be the width-$\epsilon$ partition kernel on $\mathbb{R}$,
\begin{equation}
  K_\epsilon(x,x'):=\mathbf{1}\{\lfloor x/\epsilon\rfloor=\lfloor x'/\epsilon\rfloor\}.
\end{equation}
Then $H_{K_\epsilon}(X)=h(X)+\log(1/\epsilon)+o(1)$; see, e.g., \cite{cover_thomas}. If $Y=g(X)$ for a smooth bijection $g$, transporting this kernel gives
\begin{equation}
  H_{K_\epsilon^{g^{-1}}}(Y)=H_{K_\epsilon}(X).
\end{equation}
The transported partition on the $Y$-space is not uniform: the image of an $X$-bin of width $\epsilon$ has width about $|g'(x)|\epsilon$ near $y=g(x)$. But if one forgets the transported kernel and instead re-bins $Y$ into width-$\epsilon$ intervals, equivalently using the identity kernel on the new bin labels, then the local refinement term changes from $\log(1/(|g'(x)|\epsilon))$ to $\log(1/\epsilon)$, adding $\log|g'(x)|$. Averaging over $X$ recovers the usual formula
\begin{equation}
  h(Y)=h(X)+\mathbb{E}\log |g'(X)|.
\end{equation}
So the Jacobian term comes from resetting the output notion of proximity rather than transporting it from the original task. In the present framework, relabelings instead carry the kernel with them, and Proposition~\ref{prop:invariance-isomorphism} gives the corresponding coordinate-free statement:
\begin{equation}
  H_{K'}(g_\#\mu)=H_K(\mu)
\end{equation}
whenever $g$ is an isomorphism from $(\Omega,\mu,K)$ to $(\Omega',g_\#\mu,K')$.

\paragraph*{Inference about a fixed task.}
Under inference, by contrast, the kernel stays on the $X$-space, and we want to quantify how much observing $Y$ tells us about that fixed task notion.

The partition-kernel case allows us to compare the two. Let $Z=\pi(X)$ be the coarse variable defined by the partition kernel. If we coarsen $X$ to $Z$, then no semantics are lost: the induced kernel on the $Z$-space is just the identity kernel, so
\begin{equation}
  H_{K^{\mathsf{Z},\mu}}(Z)=H(Z)=H_K(X),
\end{equation}
and at the same time Proposition~\ref{prop:conditional-partition-kernel} gives
\begin{equation}
  I_K(X;Z)=I(Z;Z)=H(Z).
\end{equation}
So for the exact coarse variable, the representation-change and inference viewpoints agree: $Z$ retains exactly the distinctions encoded by $K$.

The difference appears when we replace the exact coarse variable $Z$ by a more general observation $Y$ about $Z$. Then Proposition~\ref{prop:conditional-partition-kernel} still gives $I_K(X;Y)=I(Z;Y)$, so the inference side is just ordinary Shannon mutual information about the coarse task. On the transport side, the induced output kernel on $Y$ records posterior-support overlap rather than average informativeness. Thus $I_K(X;Y)$ asks how much $Y$ tells us on average about $Z$, while $H_{K^{\mathsf{Y},\mu}}(Y)$ asks how much of the $Z$-level distinction structure remains visible on the output space itself. Both are bounded above by $H(Z)=H_K(X)$, but in general they measure different things. For example, a noisy observation of $Z$ may still satisfy $I_K(X;Y)=I(Z;Y)>0$, while the induced kernel on $Y$ collapses to $K^{\mathsf{Y},\mu}= 1$ if both posteriors have full support, giving $H_{K^{\mathsf{Y},\mu}}(Y)=0$. In the first case, the $K$-entropy reduction represents inference; in the second, the transported output no longer separates the observation values at the $Z$ level.

\paragraph*{Approximation, estimation, and finite similarity matrices.}
The uniform representation theorem and the step-kernel approximation theorem show how the continuous case is inside the same framework as the finite similarity-matrix case: continuous $K$-entropy is obtained as a limit of finite uniform similarity-matrix approximations of the same kernelled task. One can approximate $H_K(\mu)$ by finite matrices without discretizing the state space $X$, and one can simplify the task by coarsening $K$ without changing the underlying probability law. These are different choices, and keeping them separate avoids making arbitrary coarse-graining decisions by choice of the state space itself. Unlike coordinate binning for Shannon entropy, or density-based estimation of differential entropy, the approximation here targets the same similarity-sensitive quantity throughout.

This also suggests a direct empirical estimator. Given i.i.d.\ samples $\omega_1,\dots,\omega_n\sim\mu$, define
\begin{equation}
  \widehat\tau_i:=\frac1n\sum_{j=1}^n K(\omega_i,\omega_j),
  \qquad
  \widehat H_{K,n}:= -\frac1n\sum_{i=1}^n \log \widehat\tau_i.
\end{equation}
This is the discrete $K$-entropy of the empirical law. It avoids density estimation and is natural when pairwise similarities are more informative than coordinates, but its statistical properties in this context remain to be worked out.

\paragraph*{Relation to other viewpoints.}
Relative to classical information theory, the main difference is that the similarity kernel is treated as part of the probabilistic model itself. This viewpoint is close in spirit to Blackwell's ordering of experiments~\cite{blackwell_1953}: post-processing should not create useful information about the state, but one has to say what counts as ``useful.'' Here that happens through $K$. It is also close to other similarity-based entropy constructions, but the role played by the similarity matrix is not always the same.

Bavaud~\cite{bavaud_effective_entropy} studies the same discrete "reduced entropy" functional $p\mapsto H_K(p)$, so the nonconcavity examples there are directly relevant here. But the \emph{effective entropy} introduced in that work uses the $K$ as a confusion matrix rather than as task semantics, and it is not computed directly from the one-shot typicality profile $Kp$ (or $K\mu$). We use Bavaud's examples of concavity failure for the Leinster--Cobbold functional.

GAIT~\cite{gallego_gait,gallego_gait_supp} develops symmetric conditional and mutual quantities that equip both variables with kernels. Our emphasis is different: for inference about a fixed task variable we keep the kernel on $X$ fixed and think of $Y$ as an observation about $X$, whereas for representation change we transport semantics to the output and study induced-kernel entropy there.

\subsection{Further Directions}

\paragraph*{Concavity and conditional inequalities.}
The concavity landscape is summarized as follows:
\begin{itemize}
\item \emph{Identity / partition kernels}: $H_K$ reduces to Shannon entropy of the coarse variable and is concave (Proposition~\ref{prop:conditional-partition-kernel}).
\item \emph{$|\mathsf{X}|=2$}: $H_K$ is concave for every similarity matrix (see \cite[Thm.~8]{gallego_gait_supp}).
\item \emph{One-dimensional Laplace pullback kernels ($K(x,x')=\exp(-|h(x)-h(x')|)$)}: $H_K$ is concave under a mild first-moment assumption (Theorem~\ref{thm:laplace-concavity}, Corollary~\ref{cor:laplace-pullback-inference}).
\item \emph{General fuzzy kernels, $|\mathsf{X}|\ge 3$}: nonconcavity can occur~\cite[Thm.~9]{bavaud_effective_entropy}, and SPD+MTI does not prevent it (Appendix~\ref{sec:appendix-18x18-spd-mti-counterexample}).
\end{itemize}
We do not know the minimal $|\mathsf{X}|$ for which SPD+MTI can fail concavity; the appendix gives an $18$-state example, but a smaller counterexample likely exists.
An open direction is to identify tractable sufficient conditions on
$K$ that guarantee concavity, or weaker hypotheses ensuring $I_K(X;Y)\ge 0$ for
restricted observation models and input laws.

\paragraph*{Concavity under transport (inference after representation change).}
A natural question is how concavity interacts with transport under deterministic maps or channels.
Even when the source kernel is a one-dimensional Laplace pullback, $K(\omega,\omega')=\exp(-|h(\omega)-h(\omega')|)$, the induced output kernel may take a form like
$K^{\mathsf{Y},\mu}(y,y')=\exp(-d_h(y,y'))$ with $d_h$ an essential-infimum ``fiber distance'' between posterior laws (or just fibers); in general $d_h$ need not be representable as $|g(y)-g(y')|$ and the induced kernel need not remain in a concave class.
Characterizing maps/channels (or conditions on fibers/posteriors) under which induced kernels preserve concavity, or establishing weaker conditions guaranteeing $I_K\ge 0$ on the output space, is open.

\paragraph*{Design utility}
In Bayesian optimal experiment design, each design choice $a$ specifies an observation channel $P_a(\cdot\mid t)$ from a latent task variable $T\sim\mu$ to data $Y_a$, and hence a posterior law $\mu(\cdot\mid Y_a)$.
Fixing a similarity kernel $K^T$ on $T$, one can score $a$ by the expected information gain
$U(a):=H_{K^T}(\mu)-\mathbb{E}\!\left[H_{K^T}(\mu(\cdot\mid Y_a))\right]=I_{K^T}(T;Y_a)$.
One open problem is to understand when coarse surrogates---e.g.\ approximating $P_a$ or replacing $T$ by a coarse representation---preserve or approximate the ranking of designs under $U(a)$, and to bound the resulting error via induced-kernel coarse-graining inequalities.

\appendix

\section*{Proofs and Technical Lemmas}

\section{Proof of Theorem~\ref{thm:laplace-concavity} (Laplace concavity)}
\label{sec:appendix-laplace-concavity-proof}

We prove concavity by discretizing measures on ordered grids and passing to the limit.

\subsection{Compactly supported case}

\paragraph*{Step 1: Discrete Laplace kernels on ordered grids are strictly concave.}
Fix ordered points $a\le x_1<\cdots<x_n\le b$ and define the matrix
$K^{(n)}\in\mathbb{R}^{n\times n}$ by $K^{(n)}_{ij}:=K_{\mathrm{Lap}}(x_i,x_j)=\exp(-|x_i-x_j|)$.
Write $\Delta_n:=\{p\in\mathbb{R}^n_{\ge 0}:\sum_i p_i=1\}$ for the probability simplex and $\Delta_n^\circ$ for its interior.
For $p\in\Delta_n^\circ$, define the discrete functional
\begin{equation}
  H_{K^{(n)}}(p):=-\sum_{i=1}^n p_i\log\bigl((K^{(n)}p)_i\bigr).
\end{equation}
Let $q:=K^{(n)}p\in\mathbb{R}^n_{>0}$.

\emph{Hessian computation.}
Write $H(p)=H_{K^{(n)}}(p)$ and $K=K^{(n)}$ for brevity.
Differentiating $H(p)=-\sum_i p_i\log(Kp)_i$ once gives
\begin{equation}
  \frac{\partial H}{\partial p_j}
  = -\log q_j - \sum_{i=1}^n p_i\frac{K_{ij}}{q_i}.
\end{equation}
Differentiating again,
\begin{equation}
  \frac{\partial^2 H}{\partial p_j\partial p_k}
  = -\frac{K_{jk}}{q_j}-\frac{K_{jk}}{q_k}
    +\sum_{i=1}^n p_i\frac{K_{ij}K_{ik}}{q_i^2},
\end{equation}
so for any direction $v\in\mathbb{R}^n$,
\begin{equation}
  v^\top \nabla^2 H(p)\, v
  =
  \sum_{i=1}^n\left[-2 v_i \frac{(Kv)_i}{q_i} + p_i \frac{(Kv)_i^2}{q_i^2}\right].
  \label{eq:laplace-discrete-hessian}
\end{equation}

\emph{Change of variables to $B(q)$.}
The matrix $K^{(n)}$ is invertible; for completeness, we provide its inverse below.
Let $L:=(K^{(n)})^{-1}$ and note $p=Lq$.
Writing $w:=Kv$ and $y_i:=w_i/q_i$, we have $v=Lw$ and $(Kv)_i=w_i=q_i y_i$, so each term in \eqref{eq:laplace-discrete-hessian} becomes
\begin{equation}
  -2v_i\frac{w_i}{q_i}+p_i\frac{w_i^2}{q_i^2}
  =
  -2(Lw)_i\,y_i + (Lq)_i\,y_i^2.
\end{equation}
Summing over $i$,
\begin{equation}
  -v^\top \nabla^2 H(p)\, v
  = 2\sum_{i=1}^n (Lw)_i y_i - \sum_{i=1}^n (Lq)_i y_i^2
  = 2\,y^\top L\operatorname{diag}(q)\,y - y^\top\operatorname{diag}(Lq)\,y,
  \label{eq:laplace-B-intermediate}
\end{equation}
where we used $w_i=q_i y_i$.
Using symmetry of $L$, we can write $2\,y^\top L\operatorname{diag}(q)\,y = y^\top(L\operatorname{diag}(q)+\operatorname{diag}(q)L)\,y$, giving
\begin{equation}
  -v^\top \nabla^2 H(p)\, v
  = y^\top B(q)\, y,
  \qquad
  B(q):=L\operatorname{diag}(q)+\operatorname{diag}(q)L-\operatorname{diag}(Lq).
  \label{eq:laplace-B-def}
\end{equation}
Thus concavity of $H_{K^{(n)}}$ reduces to showing $B(q)\succeq 0$ for all $q>0$.

\emph{Quadratic form identity for $B(q)$.}
For symmetric $L$, letting $r:=L\mathbf{1}$, we expand the entries of $B(q)$:
$B(q)_{ij}=L_{ij}q_j+q_iL_{ij}-\delta_{ij}\sum_k L_{ik}q_k$
(where $\delta_{ij}$ is Kronecker delta).
A direct expansion of $y^\top B(q)\,y$ and regrouping using $L_{ij}=L_{ji}$ gives
\begin{equation}
  y^\top B(q)\,y
  =
  \sum_{i=1}^n q_i r_i y_i^2
  -\sum_{1\le i<j\le n} L_{ij}(q_i+q_j)(y_i-y_j)^2.
  \label{eq:laplace-B-quad}
\end{equation}
(To verify: the diagonal part of $B(q)$ contributes $\sum_i(2L_{ii}q_i - (Lq)_i)y_i^2$; the off-diagonal pairs contribute $2\sum_{i<j}L_{ij}(q_i+q_j)y_iy_j$; combining with $-\sum_{i<j}L_{ij}(q_i+q_j)(y_i^2+y_j^2)$ from the regrouping yields \eqref{eq:laplace-B-quad}, where $r_i=\sum_k L_{ik}$ and the identity $2L_{ii}q_i-(Lq)_i+\sum_{j\neq i}(-L_{ij})(q_i+q_j)=q_ir_i$ is used.)

\emph{Tridiagonal inverse of the ordered Laplace matrix.}
Now use the ordered Laplace structure.
Set $\rho_i:=\exp(-(x_{i+1}-x_i))\in(0,1)$ for $i=1,\dots,n-1$.
One verifies directly that $L=(K^{(n)})^{-1}$ is tridiagonal by checking $LK^{(n)}=I$
(each row of the product involves at most three terms since $L$ is tridiagonal;
the Laplace structure $K^{(n)}_{ij}=\prod_{k=\min(i,j)}^{\max(i,j)-1}\rho_k$ makes the verification a short calculation).
The entries are
\begin{equation}
  L_{i,i+1}=L_{i+1,i}=-\frac{\rho_i}{1-\rho_i^2}\ (<0),
  \qquad i=1,\dots,n-1,
\end{equation}
\begin{equation}
  L_{11}=\frac{1}{1-\rho_1^2},
  \qquad
  L_{nn}=\frac{1}{1-\rho_{n-1}^2},
\end{equation}
and, for $2\le i\le n-1$,
\begin{equation}
  L_{ii}=\frac{1}{1-\rho_{i-1}^2}+\frac{\rho_i^2}{1-\rho_i^2},
\end{equation}
with all other off-diagonals equal to $0$.
Moreover the row sums $r=L\mathbf{1}$ satisfy
\begin{equation}
  r_1=\frac{1}{1+\rho_1},\qquad
  r_i=\frac{1}{1+\rho_{i-1}}-\frac{\rho_i}{1+\rho_i}
  =\frac{1-\rho_{i-1}\rho_i}{(1+\rho_{i-1})(1+\rho_i)} > 0,
  \qquad
  r_n=\frac{1}{1+\rho_{n-1}}.
\end{equation}

\emph{Positive semidefiniteness of $B(q)$.}
Since $L$ is tridiagonal with all off-diagonal entries $L_{ij}\le 0$ (and $L_{ij}=0$ for $|i-j|\ge 2$), the sum over $i<j$ in \eqref{eq:laplace-B-quad} reduces to nearest-neighbor terms only, and each coefficient $-L_{i,i+1}>0$:
\begin{equation}
  y^\top B(q)\,y
  =
  \sum_{i=1}^n q_i r_i y_i^2
  +\sum_{i=1}^{n-1}\bigl(-L_{i,i+1}\bigr)(q_i+q_{i+1})(y_i-y_{i+1})^2
  \;\ge\; 0,
\end{equation}
for all $q>0$, since $r_i>0$ and $-L_{i,i+1}>0$. Hence $B(q)\succeq 0$, so $H_{K^{(n)}}$ is concave on $\Delta_n$.
Since $K^{(n)}$ is invertible and the right-hand side vanishes only when $y=0$ (hence $w=K^{(n)}v=0$ and $v=0$), the Hessian is negative definite on $\mathbf{1}^\perp$ and $H_{K^{(n)}}$ is strictly concave on $\Delta_n^\circ$.

\paragraph*{Step 2: Discretize measures by atomic approximations.}
Let $\mu$ be a Borel probability measure supported on $[a,b]$.
Choose a partition $\Pi_n$ of $[a,b]$ into intervals
\begin{equation}
  I_i=[t_{i-1},t_i],\qquad a=t_0<t_1<\cdots<t_n=b,
\end{equation}
with mesh $\|\Pi_n\|:=\max_i (t_i-t_{i-1})\to 0$.
Pick representatives $\xi_i\in I_i$ and define
\begin{equation}
  p^{(n)}_i:=\mu(I_i),\qquad \sum_{i=1}^n p^{(n)}_i=1,
  \qquad
  \mu^{(n)}:=\sum_{i=1}^n p^{(n)}_i\,\delta_{\xi_i}.
\end{equation}
Let $K^{(n)}\in\mathbb{R}^{n\times n}$ be the Laplace similarity matrix on this grid, i.e.\ $K^{(n)}_{ij}:=\exp(-|\xi_i-\xi_j|)$.
Then $H_{K_{\mathrm{Lap}}}(\mu^{(n)})$ coincides with the discrete functional $H_{K^{(n)}}(p^{(n)})$.

\paragraph*{Step 3: Consistency $H_{K_{\mathrm{Lap}}}(\mu^{(n)})\to H_{K_{\mathrm{Lap}}}(\mu)$.}
Write $\tau(x):=\int_a^b \exp(-|x-y|)\,d\mu(y)$ for the typicality under $\mu$.
Since $\exp(-|x-y|)\ge \exp(-(b-a))$ on $[a,b]^2$, we have
$\tau(x)\in[\exp(-(b-a)),1]$ for all $x\in[a,b]$, so $-\log\tau$ is continuous and bounded.
Moreover, for each fixed $x$, the function $y\mapsto \exp(-|x-y|)$ is $1$-Lipschitz, so
for each $i$,
\begin{equation}
  \left|(K_{\mathrm{Lap}}\mu^{(n)})(\xi_i)-\tau(\xi_i)\right|
  =
  \left|\sum_{j=1}^n \exp(-|\xi_i-\xi_j|)\mu(I_j)-\int_a^b \exp(-|\xi_i-y|)\,d\mu(y)\right|
  \le \|\Pi_n\|.
\end{equation}
Since $-\log$ is Lipschitz on $[\exp(-(b-a)),1]$, it follows that
$-\log\bigl((K_{\mathrm{Lap}}\mu^{(n)})(\xi_i)\bigr)\to -\log\tau(\xi_i)$ uniformly in $i$.
Therefore,
\begin{equation}
  H_{K_{\mathrm{Lap}}}(\mu^{(n)})
  = -\sum_{i=1}^n \mu(I_i)\,\log\bigl((K_{\mathrm{Lap}}\mu^{(n)})(\xi_i)\bigr)
  \to -\sum_{i=1}^n \mu(I_i)\,\log\bigl(\tau(\xi_i)\bigr).
\end{equation}
Finally, since $x\mapsto -\log\tau(x)$ is uniformly continuous on $[a,b]$,
\begin{equation}
  \left|H_{K_{\mathrm{Lap}}}(\mu) - \sum_{i=1}^n \mu(I_i)\,(-\log\tau(\xi_i))\right|
  \le \sup_{x,x'\in[a,b]:\,|x-x'|\le \|\Pi_n\|}\left|(-\log\tau(x))-(-\log\tau(x'))\right|
  \to 0,
\end{equation}
so $H_{K_{\mathrm{Lap}}}(\mu^{(n)})\to H_{K_{\mathrm{Lap}}}(\mu)$ as claimed.

\paragraph*{Step 4: Pass concavity to the limit.}
Fix $\mu_0,\mu_1$ supported on $[a,b]$ and $\lambda\in[0,1]$, and set $\mu_\lambda:=\lambda\mu_0+(1-\lambda)\mu_1$.
Construct the atomic approximations $\mu_k^{(n)}$ from $\mu_k$ using the same partition and the same representatives $\{\xi_i\}$, so that
$\mu_\lambda^{(n)}=\lambda\mu_0^{(n)}+(1-\lambda)\mu_1^{(n)}$.
By Step 1,
\begin{equation}
  H_{K_{\mathrm{Lap}}}(\mu_\lambda^{(n)})
  \ge
  \lambda H_{K_{\mathrm{Lap}}}(\mu_0^{(n)}) + (1-\lambda)H_{K_{\mathrm{Lap}}}(\mu_1^{(n)}).
\end{equation}
Letting $n\to\infty$ and using Step 3 for each of $\mu_\lambda,\mu_0,\mu_1$ yields the desired inequality. \qed

\subsection{Extension to unbounded support via truncation}
\label{app:laplace-concavity-P1-proof}
\begin{proof}[Proof of Corollary~\ref{cor:laplace-concavity-P1}]
We extend concavity from compactly supported laws to laws on $\mathbb{R}$ with finite first moment.

For $R>0$, define $\pi_R:\mathbb{R}\to[-R,R]$ by $\pi_R(s):=\max\{-R,\min\{s,R\}\}$ and set $\mu^{(R)}:=(\pi_R)_\#\mu$.
Then $\mu^{(R)}$ is supported on $[-R,R]$, so Theorem~\ref{thm:laplace-concavity} implies that $\nu\mapsto H_{K_{\mathrm{Lap}}}(\nu)$ is concave on measures supported on $[-R,R]$.

Write
\begin{equation}
  \tau_\mu(x):=\int_{\mathbb{R}} e^{-|x-y|}\,d\mu(y),
  \qquad
  H_{K_{\mathrm{Lap}}}(\mu)=\int_{\mathbb{R}} -\log \tau_\mu(x)\,d\mu(x),
\end{equation}
whenever finite. Using $e^{-|x-y|}\ge e^{-(|x|+|y|)}=e^{-|x|}e^{-|y|}$ we have
\begin{equation}
  \tau_\mu(x)\ge e^{-|x|}c_\mu,
  \qquad
  c_\mu:=\int_{\mathbb{R}} e^{-|y|}\,d\mu(y)\in(0,1],
\end{equation}
hence $-\log\tau_\mu(x)\le |x|-\log c_\mu$.
By Jensen,
\begin{equation}
  -\log c_\mu=-\log\!\left(\int e^{-|y|}\,d\mu(y)\right)
  \le \int |y|\,d\mu(y),
\end{equation}
so if $\mu\in\mathcal P_1(\mathbb{R})$ then $|x|-\log c_\mu$ is $\mu$-integrable and $H_{K_{\mathrm{Lap}}}(\mu)<\infty$.

Next, for any $x\in\mathbb{R}$,
\begin{equation}
  \tau_{\mu^{(R)}}(\pi_R x)
  =\int_{\mathbb{R}} e^{-|\pi_R x-z|}\,d\mu^{(R)}(z)
  =\int_{\mathbb{R}} e^{-|\pi_R x-\pi_R y|}\,d\mu(y).
\end{equation}
As $R\to\infty$, the integrand converges pointwise to $e^{-|x-y|}$ and is bounded by $1$, so dominated convergence yields $\tau_{\mu^{(R)}}(\pi_R x)\to\tau_\mu(x)$ for every $x$.
Moreover, since $|\pi_R|\le|\cdot|$ we have
\begin{equation}
  c_{\mu^{(R)}}=\int e^{-|\pi_R y|}\,d\mu(y)\ge \int e^{-|y|}\,d\mu(y)=c_\mu,
\end{equation}
and therefore $\tau_{\mu^{(R)}}(\pi_R x)\ge e^{-|x|}c_\mu$, giving the uniform domination
$-\log \tau_{\mu^{(R)}}(\pi_R x)\le |x|-\log c_\mu$.
For $\mu\in\mathcal P_1(\mathbb{R})$, another dominated convergence step yields
\begin{align}
  H_{K_{\mathrm{Lap}}}(\mu^{(R)})
  &=
  \int_{\mathbb{R}} -\log \tau_{\mu^{(R)}}(z)\,d\mu^{(R)}(z) \\
  &=
  \int_{\mathbb{R}} -\log \tau_{\mu^{(R)}}(\pi_R x)\,d\mu(x) \\
  &\to
  \int_{\mathbb{R}} -\log \tau_\mu(x)\,d\mu(x)
  =H_{K_{\mathrm{Lap}}}(\mu).
\end{align}

Finally, let $\mu_0,\mu_1\in\mathcal P_1(\mathbb{R})$ and $\mu_\lambda:=\lambda\mu_0+(1-\lambda)\mu_1$.
Since pushforward is linear, $\mu_\lambda^{(R)}=\lambda\mu_0^{(R)}+(1-\lambda)\mu_1^{(R)}$.
Concavity on $[-R,R]$ gives
\begin{equation}
  H_{K_{\mathrm{Lap}}}\bigl(\mu_\lambda^{(R)}\bigr)
  \ge
  \lambda H_{K_{\mathrm{Lap}}}\bigl(\mu_0^{(R)}\bigr)
  + (1-\lambda) H_{K_{\mathrm{Lap}}}\bigl(\mu_1^{(R)}\bigr),
\end{equation}
and letting $R\to\infty$ yields concavity on $\mathcal P_1(\mathbb{R})$.
\end{proof}

\section{Measurability of the Posterior-Induced Kernel}
\label{sec:appendix-measurability-induced-kernel}

This appendix records the measurability and disintegration facts used in
Proposition~\ref{prop:posterior-induced-fixed-law-minimality}.
Fix $q\in\mathbb{Q}\cap[0,1]$ and
$A_q:=\{(\omega,\omega')\in\Omega\times\Omega:\ K(\omega,\omega')>q\}$.
Since $y\mapsto\mu_{X\mid Y=y}$ is a probability kernel, a monotone-class argument
from rectangles shows that
$(y,y')\mapsto(\mu_{X\mid Y=y}\otimes\mu_{X\mid Y=y'})(A_q)$ is
$\mathcal{F}_Y\otimes\mathcal{F}_Y$-measurable.
For $y\neq y'$,
\begin{equation}
K^{\mathsf{Y},\mu}(y,y')
=
\sup_{q\in\mathbb{Q}\cap[0,1]}
q\,\mathbf{1}\{(\mu_{X\mid Y=y}\otimes\mu_{X\mid Y=y'})(A_q)>0\},
\end{equation}
so $K^{\mathsf{Y},\mu}$ is measurable as a supremum of countably many measurable
functions (with diagonal set to $1$ by convention).
If $\{\mu_{X\mid Y=y}\}$ and $\{\mu'_{X\mid Y=y}\}$ are two versions of the disintegration, then
$\mu_{X\mid Y=y}=\mu'_{X\mid Y=y}$ for $\mu_Y$-a.e.\ $y$, hence
$\mu_{X\mid Y=y}\otimes\mu_{X\mid Y=y'}=\mu'_{X\mid Y=y}\otimes\mu'_{X\mid Y=y'}$ for $\mu_Y\otimes\mu_Y$-a.e.\ $(y,y')$,
so the resulting essential-supremum kernels agree $\mu_Y\otimes\mu_Y$-a.e.

\section{Realization invariance for Markov kernels}
\label{sec:appendix-markov-realization-invariance}

\begin{lemma}[Realization invariance]
\label{lem:markov-realization-invariance}
Let $\Phi:\Omega\times[0,1]\to \mathsf{Y}$ be any realization of $P(\cdot\mid\omega)$ as in
Remark~\ref{rem:markov-realization}. Let $K^{\mathsf{Y},\Phi}$ be the posterior-induced output kernel on $\mathsf{Y}$ associated with the lifted deterministic pair $((X,R),Y)$ on $(\Omega\times[0,1],\mu\otimes\lambda,\tilde K)$ under $Y=\Phi(X,R)$. Then
\begin{equation}
  K^{\mathsf{Y},\Phi}(y,y') = K^{\mathsf{Y},\mu}(y,y')
  \quad\text{for }(\mu_Y\otimes\mu_Y)\text{-a.e.\ }(y,y').
\end{equation}
\end{lemma}

\begin{proof}
Let $\tilde\Omega:=\Omega\times[0,1]$, $\tilde\mu:=\mu\otimes\lambda$, and
$\tilde K((\omega,r),(\omega',r')):=K(\omega,\omega')$.
Let $\tilde\mu_{XY}$ be the joint law of $((X,R),Y)$ under $Y=\Phi(X,R)$, and
disintegrate it along $Y$ to obtain conditional laws
$\{\tilde\mu_{X\mid Y=y}\}_{y\in\mathsf{Y}}$ on $\tilde\Omega$.

Let $\pi_\Omega:\tilde\Omega\to\Omega$ be the projection $\pi_\Omega(\omega,r):=\omega$.
Since the pushforward of $\tilde\mu_{XY}$ under $((\omega,r),y)\mapsto(\omega,y)$ is the
joint law $\mu_{XY}$ of $(X,Y)$, the $\Omega$-marginal of $\tilde\mu_{X\mid Y=y}$ is
$\mu_{X\mid Y=y}$ for $\mu_Y$-a.e.\ $y$.

Now fix $y\neq y'$. Because $\tilde K$ depends only on the $\Omega$-coordinates,
\[
\operatorname*{ess\,sup}_{\tilde\mu_{X\mid Y=y}\otimes \tilde\mu_{X\mid Y=y'}} \tilde K
=
\operatorname*{ess\,sup}_{\mu_{X\mid Y=y}\otimes \mu_{X\mid Y=y'}} K.
\]
Hence the posterior-induced construction on the lifted space gives
$K^{\mathsf{Y},\Phi}(y,y')=K^{\mathsf{Y},\mu}(y,y')$
for $(\mu_Y\otimes\mu_Y)$-a.e.\ $(y,y')$ with $y\neq y'$. On the diagonal both kernels are
set to $1$ by convention.
\end{proof}

\section{Diagonal Repair Bound for Uniform Laws}
\label{sec:appendix-diagonal-repair}

\begin{lemma}[Diagonal repair has vanishing effect for uniform laws]
\label{lem:diagonal-repair-uniform}
Let $A,A'\in[0,1]^{n\times n}$ satisfy $A'_{ij}=A_{ij}$ for $i\neq j$ and
$A'_{ii}\ge A_{ii}$ for all $i$. Let $p^{(n)}$ be the uniform pmf on
$\{1,\dots,n\}$ and write $t_i:=(Ap^{(n)})_i$. If $t_i\ge \varepsilon$ for
all $i$ for some $\varepsilon>0$, then
\begin{equation}
  0 \le H_A(p^{(n)}) - H_{A'}(p^{(n)}) \le \frac{1}{\varepsilon n}.
\end{equation}
\end{lemma}

\begin{proof}
Since $A'$ differs from $A$ only on the diagonal, for each $i$ we have
\begin{equation}
  (A'p^{(n)})_i
  = (Ap^{(n)})_i + \frac{A'_{ii}-A_{ii}}{n}
  = t_i + \frac{\delta_i}{n}
\end{equation}
for some $\delta_i\in[0,1]$. Hence
\begin{equation}
  H_A(p^{(n)}) - H_{A'}(p^{(n)})
  = \frac{1}{n}\sum_{i=1}^n \log\!\left(\frac{t_i+\delta_i/n}{t_i}\right)
  = \frac{1}{n}\sum_{i=1}^n \log\!\left(1+\frac{\delta_i}{n t_i}\right).
\end{equation}
Each summand is nonnegative. Using $\log(1+u)\le u$ and $t_i\ge\varepsilon$
gives
\[
  H_A(p^{(n)}) - H_{A'}(p^{(n)})
  \le \frac{1}{n}\sum_{i=1}^n \frac{\delta_i}{n t_i}
  \le \frac{1}{n}\sum_{i=1}^n \frac{1}{n\varepsilon}
  = \frac{1}{\varepsilon n}.
\]
\hfill\qedsymbol
\end{proof}

\section{Step-kernel typicality and proof of Theorem~\ref{thm:discrete-to-continuous}}
\label{sec:appendix-proof-discrete-to-continuous}

\begin{lemma}[Typicality of the step kernel]
\label{lem:step-kernel-typicality}
For each $n\in\mathbb{N}$ and $u \in I_i^{(n)}$,
\begin{equation}
  \tau_n(u) \;=\; \frac{1}{\lambda(I_i^{(n)})}
    \int_{I_i^{(n)}} \tau(s)\,d\lambda(s)
  \;=\; n \int_{I_i^{(n)}} \tau(s)\,d\lambda(s).
\end{equation}
\end{lemma}

\begin{proof}
Fix $n$ and $u\in I_i^{(n)}$.  Then
\begin{equation}
  \begin{aligned}
  \tau_n(u)
  &= \sum_{j=1}^n \int_{I_j^{(n)}} K_n(u,u')\,d\lambda(u')\\
  &= \sum_{j=1}^n \frac{1}{n}\, n^2 \int_{I_i^{(n)} \times I_j^{(n)}} K(s,t)\,d\lambda(s)\,d\lambda(t)\\
  &= n\int_{I_i^{(n)}} \tau(s)\,d\lambda(s),
  \end{aligned}
\end{equation}
since $\lambda(I_j^{(n)})=\lambda(I_i^{(n)})=1/n$.
\end{proof}

\begin{proof}[Proof of Theorem~\ref{thm:discrete-to-continuous}]
Recall from the construction in Section~\ref{sec:representation} that
$H_{\widetilde K^{(n)}}(p^{(n)}) = H_{K_n}(\lambda)$ (Lemma~\ref{lem:pullback-identity-any-output-kernel}, with only a diagonal discrepancy).
The typicality function $\tau_n$ of $K_n$ is given by Lemma~\ref{lem:step-kernel-typicality} as
\begin{equation}
  \tau_n(u) = \mathbb{E}[\tau \mid \mathcal{F}_n](u),
\end{equation}
where $\mathcal{F}_n$ is the $\sigma$-algebra generated by the partition intervals $I_i^{(n)}$. Since $\tau_n(u)$ is the average of $\tau$ over the bin containing $u$, the Lebesgue differentiation theorem gives $\tau_n \to \tau$ almost everywhere, hence $-\log \tau_n \to -\log \tau$ almost everywhere.

Since $x \mapsto -\log x$ is convex, Jensen's inequality for conditional expectations gives
\begin{equation}
  -\log \tau_n(u) = -\log(\mathbb{E}[\tau \mid \mathcal{F}_n]) \le \mathbb{E}[-\log \tau \mid \mathcal{F}_n].
\end{equation}
Integrating yields $H_{K_n}(\lambda) \le H_K(\lambda)$ for all $n$.

If $H_K(\lambda) < \infty$, then $\log \tau \in L^1$. The sequence of random variables $Y_n = \mathbb{E}[-\log \tau \mid \mathcal{F}_n]$ is uniformly integrable (as conditional expectations of an integrable variable). Since $0 \le -\log \tau_n \le Y_n$ (using $\tau_n \le 1$), the sequence $-\log \tau_n$ is also uniformly integrable. Thus $-\log \tau_n \to -\log \tau$ in $L^1$, implying $H_{K_n}(\lambda) \to H_K(\lambda)$.

If $H_K(\lambda) = \infty$, then by Fatou's lemma applied to the non-negative functions $-\log \tau_n$ (since $\tau_n \le 1$),
\begin{equation}
  \infty = \int (-\log \tau) \le \liminf_{n\to\infty} \int (-\log \tau_n),
\end{equation}
so $\liminf_{n\to\infty} H_{K_n}(\lambda)=\infty$, hence
$H_{K_n}(\lambda)\to\infty$ in extended reals.

Finally, if $\tau(u)\ge\varepsilon$ a.e.\@, then $\tau_n(u)\ge\varepsilon$ a.e.\@,
as conditional expectations, and the diagonal-repair bound follows from
Lemma~\ref{lem:diagonal-repair-uniform} (Appendix~\ref{sec:appendix-diagonal-repair}), applied to $A=\widetilde K^{(n)}$
and $A'=K^{(n)}$ (since $(\widetilde K^{(n)}p^{(n)})_i = \tau_n(u)$ for
$u\in I_i^{(n)}$). Combining this with
$H_{\widetilde K^{(n)}}(p^{(n)})=H_{K_n}(\lambda)\to H_K(\lambda)$ yields
the same limit for $H_{K^{(n)}}(p^{(n)})$.
\end{proof}

\section*{Additional Results}

\section{An \texorpdfstring{$18\times 18$}{18x18} SPD+MTI kernel with nonconcave \texorpdfstring{$H_K$}{H\_K}}
\label{sec:appendix-18x18-spd-mti-counterexample}

This appendix gives a finite-state similarity matrix $K$ that is symmetric
positive definite (SPD) and satisfies the multiplicative triangle inequality (MTI), but for which
$p\mapsto H_K(p)$ fails concavity on the simplex.
This provides a negative resolution of GAIT Conjecture~1~\cite[Conj.~1]{gallego_gait}.

\begin{example}[An $18\times 18$ hub and spoke kernel]\label{ex:spd-mti-18x18-hub-spoke}
Let $m:=17$, $a:=0.2$, and $b:=0.045$.
Index the states by $\{0,1,\dots,m\}$, with $0$ the hub and $\{1,\dots,m\}$ the leaves, and define
\begin{equation}
  K_{00}=1,\qquad
  K_{0i}=K_{i0}=a\ (i\ge 1),\qquad
  K_{ii}=1,\qquad
  K_{ij}=b\ (i\neq j,\ i,j\ge 1).
\end{equation}
Equivalently, writing $\mathbf{1}\in\mathbb{R}^m$ for the all-ones vector,
\begin{equation}
  K=
  \begin{pmatrix}
    1 & a\mathbf{1}^\top\\
    a\mathbf{1} & (1-b)I_m+b\mathbf{1}\mathbf{1}^\top
  \end{pmatrix}.
\end{equation}

This matrix satisfies MTI ($K_{ik}\ge K_{ij}K_{jk}$ for all $i,j,k$): the cases with any repeated index or with $j$ a leaf and $i,k$ both leaves reduce to $b\ge b^2$ (true since $b<1$); when the hub is an endpoint ($i=0$, $j,k$ leaves or vice versa) the requirement is $a\ge ab$ (true); the binding case is distinct leaves $i,k$ with intermediate $j=0$, requiring $b\ge a^2$; here $0.045\ge 0.04$.
It is also SPD: since $1-b>0$, the Schur complement is
$(1-b)I_m+(b-a^2)\mathbf{1}\mathbf{1}^\top$, whose eigenvalues are $1-b$ (multiplicity $m-1$) and
$1-b+m(b-a^2)=1.04>0$.

It is nevertheless not concave in the sense that $p\mapsto H_K(p)$ fails concavity on $\Delta_{18}^\circ$.
Consider the one-parameter family of interior laws
\begin{equation}
  p(t):=\left(t,\frac{1-t}{m},\dots,\frac{1-t}{m}\right),\qquad t\in(0,1).
\end{equation}
Writing $\tau_0(t)$ for the typicality of the hub and $\tau_L(t)$ for the (common) typicality of the leaves,
\begin{equation}
  \tau_0(t)=t+a(1-t),\qquad
  \tau_L(t)=at+c(1-t),\qquad
  c:=\frac{1+(m-1)b}{m}.
\end{equation}
Hence
\begin{equation}
  H_K(p(t)) = -t\log\tau_0(t) - (1-t)\log\tau_L(t),
\end{equation}
Writing $f(t):=H_K(p(t))$ and abbreviating $\tau_0\equiv\tau_0(t)$, $\tau_L\equiv\tau_L(t)$,
\begin{equation}
  f'(t)=-\log\tau_0+\log\tau_L
        -\frac{t(1-a)}{\tau_0}
        +\frac{(1-t)(a-c)}{\tau_L}.
\end{equation}
Note the identities $\tau_0-(1-a)t=a$ and $\tau_L+(a-c)(1-t)=a$
(both follow from the definitions).
A second differentiation gives
\begin{equation}
  f''(t)=-(1-a)\frac{\tau_0+a}{\tau_0^2}
        +(a-c)\frac{\tau_L+a}{\tau_L^2}.
\end{equation}
Evaluating at $t=1$ (where $\tau_0(1)=1$ and $\tau_L(1)=a=0.2$) gives
\begin{equation}
  f''(1)=-(1-a)\frac{1+a}{1}+\frac{(a-c)\cdot 2a}{a^2}
        =-(1-a^2)+\frac{2(a-c)}{a}.
\end{equation}
Substituting $c=\frac{1+16\cdot 0.045}{17}=\frac{1.72}{17}$ so that
$a-c=0.2-\frac{1.72}{17}=\frac{1.68}{17}$,
\begin{equation}
  f''(1)=-0.96+\frac{2\cdot 1.68}{17\cdot 0.2}
        =-\frac{96}{100}+\frac{336}{340}
        =\frac{12}{425}>0,
\end{equation}
so $H_K$ is not concave on $\Delta_{18}^\circ$.
\end{example}

\section{Typicality Distribution and Partition Kernels}
\label{sec:structural-properties}

The typicality distribution is isomorphism-invariant, so it gives a way to distinguish kernelled distributions, and in particular to separate partition kernels from general fuzzy kernels.
In the atomless uniform representation it depends only on the transport-equivalence class $[K]_\sim$ (Remark~\ref{rem:transport-equivalence}).

We begin with the partition-kernel case on general probability spaces.
This is the measure-theoretic version of the finite coarse kernel idea from
Subsection~\ref{sec:discrete-entropy}. Choose an associated measurable map
$f:\Omega\to\{1,\dots,m\}$ and write $C_j:=f^{-1}(j)$ for the partition
classes.

Let $\alpha_j := \mu(C_j)$ be the mass of the $j$th class.

\begin{proposition}[Typicality for partition kernels]
\label{prop:T-for-partition-kernel}
Let $(\Omega,\mu,K)$ be a probability space with a finite-class partition kernel
$K$ with classes $\{C_j\}$ and masses $\alpha_j$. Then the typicality
function $\tau$ satisfies:
\begin{enumerate}
  \item $\tau(\omega) = \alpha_j$ for all $\omega\in C_j$;
  \item the distribution of $\tau(\omega)$ under $\omega\sim\mu$ is
\begin{equation}
      \mathbb{P}(\tau(\omega) = \alpha_j) = \alpha_j,\qquad j=1,\dots,m.
\end{equation}
\end{enumerate}
\end{proposition}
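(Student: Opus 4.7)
The plan is to compute $\tau$ blockwise and then push forward under $\mu$. For part~(1), I would fix $j$ and $\omega\in C_j$, and split the defining integral along the partition:
\[
  \tau(\omega) = \int_\Omega K(\omega,\omega')\,d\mu(\omega') = \sum_{k=1}^m \int_{C_k} K(\omega,\omega')\,d\mu(\omega').
\]
The definition of partition kernel forces the integrand to equal $1$ on $C_j$ and $0$ on every $C_k$ with $k\neq j$, so the sum collapses to $\mu(C_j) = \alpha_j$. This gives $\tau(\omega) = \alpha_j$ for $\mu$--a.e.\ $\omega\in C_j$, which is part~(1).

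Part~(2) then follows by a direct pushforward argument: since $\tau$ is (essentially) constant equal to $\alpha_j$ on $C_j$ and $\mu(C_j)=\alpha_j$, the distribution of $\tau$ under $\mu$ is $\tau_\#\mu = \sum_{j=1}^m \alpha_j\,\delta_{\alpha_j}$, which is the claimed formula. I do not expect any real obstacle; the only small points to watch are (i)~the partition is only defined modulo $\mu$--null sets, which is harmless since the equality $\tau=\alpha_j$ on $C_j$ holds $\mu$--a.e.\ and distributional statements are insensitive to null-set modifications, and (ii)~when two classes share the same mass, the written formula $\mathbb{P}(\tau=\alpha_j)=\alpha_j$ should be read indexed by class $j$ rather than by distinct value, so that coincident $\alpha_j$'s contribute additively in $\tau_\#\mu$.
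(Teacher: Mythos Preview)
Your proof is correct and essentially identical to the paper's own argument: both compute $\tau(\omega)$ for $\omega\in C_j$ by reducing the defining integral to $\int_{C_j}1\,d\mu=\alpha_j$, and then read off the distribution of $\tau$ from $\mu(C_j)=\alpha_j$. Your additional remarks on null sets and coincident $\alpha_j$'s are sound but go slightly beyond what the paper records.
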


\begin{proof}
For $\omega\in C_j$,
\begin{equation}
  \tau(\omega)
  = \int_\Omega K(\omega,\omega')\, d\mu(\omega')
  = \int_{C_j} 1 \, d\mu(\omega')
  = \alpha_j.
\end{equation}
Thus $\tau$ is constant on each $C_j$ with value $\alpha_j$. The second
statement follows immediately:
\[
  \mathbb{P}(\tau(\omega) = \alpha_j)
  = \mu(C_j)
  = \alpha_j.
\]
\end{proof}

\begin{proposition}[Typicality distribution is an isomorphism invariant]
\label{prop:T-distribution-invariant}
Let $(\Omega,\mu,K)$ and $(\Omega',\mu',K')$ be isomorphic with
isomorphism $\phi:\Omega\to\Omega'$. Let $\tau$ and $\tau'$ be their
respective typicality functions. Then the pushforward laws of
$\tau(\omega)$ under $\mu$ and $\tau'(\omega')$ under $\mu'$ coincide.
\end{proposition}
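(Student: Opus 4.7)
The plan is to reduce everything to the pointwise identity $\tau'\circ\phi = \tau$ $\mu$--a.e., which was essentially the engine of Proposition~\ref{prop:invariance-isomorphism}; once we have that, pushforward equality of the two laws is immediate.

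First I would establish the a.e.\ identity. Using the isomorphism properties, there is a $\mu\otimes\mu$--null set outside of which $K'(\phi(\omega),\phi(\omega'))=K(\omega,\omega')$, so by Fubini, for $\mu$--a.e.\ $\omega$ the equality $K'(\phi(\omega),\phi(\omega'))=K(\omega,\omega')$ holds for $\mu$--a.e.\ $\omega'$. Fixing such an $\omega$, I would compute
\[
  \tau'(\phi(\omega))
  = \int_{\Omega'} K'(\phi(\omega),\omega'')\, d\mu'(\omega'')
  = \int_{\Omega} K'(\phi(\omega),\phi(\omega'))\, d\mu(\omega')
  = \int_{\Omega} K(\omega,\omega')\, d\mu(\omega')
  = \tau(\omega),
\]
where the second equality is the change of variables formula applied to the measurable function $\omega''\mapsto K'(\phi(\omega),\omega'')$ under the pushforward identity $\phi_\#\mu=\mu'$. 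This yields $\tau' \circ \phi = \tau$ on a set of full $\mu$--measure.

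Second, I would deduce equality of the pushforward laws. For any Borel $B\subseteq[0,1]$, the a.e.\ identity gives $\{\omega:\tau(\omega)\in B\} = \phi^{-1}(\{\omega':\tau'(\omega')\in B\})$ up to a $\mu$--null set, so
\[
  (\tau_\#\mu)(B)
  = \mu\bigl(\tau^{-1}(B)\bigr)
  = \mu\bigl(\phi^{-1}((\tau')^{-1}(B))\bigr)
  = (\phi_\#\mu)\bigl((\tau')^{-1}(B)\bigr)
  = \mu'\bigl((\tau')^{-1}(B)\bigr)
  = (\tau'_\#\mu')(B),
\]
again using $\phi_\#\mu=\mu'$. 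Since this holds for every Borel $B$, the two laws coincide.

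The only subtle step is the change of variables inside the definition of $\tau'(\phi(\omega))$, which rests on the measure-preserving clause of the isomorphism rather than on any deeper property of $\phi$; apart from that, the argument is purely bookkeeping, and the result can in fact be read off from the computation already carried out in the proof of Proposition~\ref{prop:invariance-isomorphism}.
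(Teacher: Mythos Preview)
Your proof is correct and follows essentially the same approach as the paper: both establish the pointwise identity $\tau'\circ\phi=\tau$ $\mu$--a.e.\ (the paper simply cites it from Proposition~\ref{prop:invariance-isomorphism}, while you re-derive it) and then use the measure-preserving property of $\phi$ to conclude equality of the pushforward laws. The only cosmetic difference is that the paper tests against bounded measurable functions $\varphi$ whereas you test against Borel sets $B$, which are equivalent standard routes to the same conclusion.
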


\begin{proof}
From the proof of Proposition~\ref{prop:invariance-isomorphism}, we
have $\tau'(\phi(\omega))=\tau(\omega)$ for $\mu$-a.e.\ $\omega$, and
$\phi_\#\mu=\mu'$. Therefore for every Borel set $B\subseteq\mathbb{R}$,
\begin{equation}
\mu\!\left(\{\omega:\tau(\omega)\in B\}\right)
=\mu\!\left(\{\omega:\tau'(\phi(\omega))\in B\}\right)
=\mu'\!\left(\{\omega':\tau'(\omega')\in B\}\right).
\end{equation}
Hence the pushforward laws of $\tau$ under $\mu$ and $\tau'$ under
$\mu'$ coincide.
\end{proof}

In particular, on atomless spaces represented on $([0,1],\lambda)$, the law of
$\tau$ depends only on the transport-equivalence class $[K]_\sim$ from
Remark~\ref{rem:transport-equivalence}, since transport-equivalent kernels are
exactly those that define isomorphic kernelled probability spaces on
$([0,1],\lambda)$.

\begin{corollary}[Typicality-distribution and finite partition equivalence]
\label{cor:necessary-condition-block-partition}
Suppose $(\Omega,\mu,K)$ is isomorphic to a probability space with a
partition kernel having classes of masses $\{\alpha_1,\dots,\alpha_m\}$.
Then the distribution of typicality $\tau(\omega)$ under $\omega\sim\mu$ is
\begin{equation}
  \sum_{j=1}^m \alpha_j \delta_{\alpha_j},
\end{equation}
i.e.\ $\tau$ takes only finitely many values, each value $\alpha_j$
occurring with probability $\alpha_j$. This provides an obstruction to lying in
the isomorphism class of a finite-class partition kernel.
\end{corollary}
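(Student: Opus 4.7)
The plan is to derive this as an immediate consequence of the two propositions immediately preceding it. Proposition~\ref{prop:T-for-partition-kernel} computes the typicality law explicitly in the partition-kernel case, and Proposition~\ref{prop:T-distribution-invariant} shows that this law is an isomorphism invariant. Since the hypothesis supplies an isomorphism $\phi$ from $(\Omega,\mu,K)$ to a kernelled probability space $(\Omega',\mu',K')$ whose kernel is a partition kernel with class masses $\{\alpha_1,\dots,\alpha_m\}$, it suffices to compute the law of typicality on the partition side and then transport it back via $\phi$.

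Concretely, I would proceed in two short steps. First, let $\tau'$ denote the typicality function of $(\Omega',\mu',K')$. By Proposition~\ref{prop:T-for-partition-kernel}, $\tau'$ equals $\alpha_j$ identically on the block $C_j$, so its pushforward under $\mu'$ is the finitely supported measure $\sum_{j=1}^m \alpha_j\,\delta_{\alpha_j}$. Second, by Proposition~\ref{prop:T-distribution-invariant} applied to the isomorphism $\phi$, the pushforward law of $\tau$ under $\mu$ coincides with the pushforward law of $\tau'$ under $\mu'$. Chaining these two identities yields the claimed distribution.

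I do not expect any genuine obstacle here; the argument is a direct concatenation of the two results just proved. The only small subtlety worth noting is notational: if some of the masses $\alpha_j$ coincide, the expression $\sum_{j=1}^m \alpha_j\,\delta_{\alpha_j}$ should be interpreted as a formal sum indexed by the blocks, so that a value $v\in[0,1]$ occurs with total probability $\sum_{j:\alpha_j=v}\alpha_j$ under $\tau_\#\mu$; I would add one clarifying sentence to this effect to forestall a literal reading of ``each value $\alpha_j$ occurring with probability $\alpha_j$.'' The parenthetical ``necessary but not sufficient'' claim at the end of the corollary requires no proof, since it merely disclaims a converse that is not being asserted, but it can be motivated by a one-line remark pointing out that two non-isomorphic partition kernels with matching multisets of class masses produce the same typicality law.
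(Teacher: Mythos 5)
Your proposal is correct and matches the paper's intended argument exactly: the corollary is stated as an immediate combination of Proposition~\ref{prop:T-for-partition-kernel} (typicality law for partition kernels) and Proposition~\ref{prop:T-distribution-invariant} (invariance of the typicality distribution under isomorphism), which is precisely your two-step chain. The clarifying remark about coinciding masses is a reasonable minor addition but changes nothing substantive.
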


Thus if the law of $\tau(\omega)$ under $\omega\sim\mu$ is not finitely
supported, then $(\Omega,\mu,K)$ cannot be isomorphic to any finite-class
partition kernel. On atomless spaces, the same obstruction depends only on the
transport-equivalence class $[K]_\sim$.

\section*{Funding}
% TODO (format): replace with the final journal-ready funding statement if needed.
No external funding was received for this work.

\section*{Acknowledgements}
I want to thank Ross Granowski for many helpful conversations about the material in this paper over the years, and for comments on an early draft.

\section*{Data availability}
% TODO (format): confirm the final submission wording for data/code availability.
No new data were generated or analyzed in this study. The theoretical results and counterexample are fully specified in the manuscript.

\end{document}